\newcommand{\R}{{\mathbb R}}
\newcommand{\N}{{\mathbb N}}
\newcommand{\Z}{{\mathbb Z}}
\newcommand{\raisecomma}{\raisebox{2pt}{$,$}}
\newcommand{\raisedot}{\raisebox{2pt}{$.$}}
\newcommand{\squash}{\vspace*{-10pt}}
\newtheorem{theorem}{Theorem}
\newtheorem{corollary}{Corollary}
\newtheorem{proposition}{Proposition}
\newtheorem{definition}{Definition}
\newtheorem{remark}{Remark}
\newtheorem{conjecture}{Conjecture}
\newcounter{spectrum}\setcounter{spectrum}{1}
\newcounter{maxdet}\setcounter{maxdet}{2}
\newcounter{depth}\setcounter{depth}{3}
\newcounter{bounds_15_16}\setcounter{bounds_15_16}{4}
\newcounter{freqs_15_7}\setcounter{freqs_15_7}{5}
\newcounter{order1}\setcounter{order1}{6}
\newcounter{order12}\setcounter{order12}{19}
\newcounter{order16a}\setcounter{order16a}{23}
\newcounter{order16d}\setcounter{order16d}{26}
\newcounter{order18a}\setcounter{order18a}{30}
\newcounter{order18b}\setcounter{order18b}{31}
\newcounter{order18c}\setcounter{order18c}{32}
\newcounter{order21e}\setcounter{order21e}{43}
\newcommand{\seqnum}[1]{\href{http://oeis.org/#1}{\underline{#1}}}
\begin{document}
\bibliographystyle{plain}
\title{On minors of maximal determinant matrices}
\author{Richard P. Brent\\
Mathematical Sciences Institute\\
Australian National University\\
Canberra, ACT 0200,
Australia\\
\href{mailto:minors@rpbrent.com}{\tt minors@rpbrent.com}
\and
Judy-anne H. Osborn\\
CARMA\\
The University of Newcastle\\
Callaghan, NSW 2308,
Australia\\
\href{mailto:Judy-anne.Osborn@newcastle.edu.au}%
{\tt Judy-anne.Osborn@newcastle.edu.au}
}

\date{}	% No date

\maketitle
\thispagestyle{empty}                   % To avoid page number

\begin{abstract}
By an old result of Cohn (1965),
a Hadamard matrix of order $n$ has no proper Hada\-mard submatrices of order
$m > n/2$. We generalise this result to maximal determinant submatrices of
Hadamard matrices, and show that an interval of length $\sim n/2$ is
excluded from the allowable orders. We make a conjecture regarding a lower
bound for sums of squares of minors of maximal determinant matrices, and
give evidence to support it. We give tables of the
values taken by the minors of all maximal determinant matrices of orders
$\le 21$ and make some observations on the data. Finally, we describe the
algorithms that were used to compute the tables.
\end{abstract}

\pagebreak[3]
\section{Introduction}		\label{sec:intro}

A $\{+1, -1\}$-matrix (abbreviated ``$\{\pm 1\}$-matrix'' below) is
a matrix whose elements are $+1$ or $-1$.
The \emph{Hadamard maximal determinant problem}, posed by
Hadamard~\cite{Hadamard}, is to find the maximal determinant $D(n)$
of an $n \times n$ $\{\pm 1\}$-matrix of given order $n$.
A matrix that attains the maximum is a \emph{maximal determinant matrix}
(abbreviated \emph{maxdet matrix}).
Such matrices, of which Hadamard
matrices are a special case, are of interest in combinatorics and have
applications in statistical design~\cite{KhOr06}, coding theory and signal
processing~\cite{Horadam07,Orrick-www,SeYa92}.  In design theory
a maxdet matrix is also known as a 
\emph{saturated D-optimal design}. 

\pagebreak[3]
Finding maxdet matrices is in general difficult, with the
difficulty depending on the congruence class of $n \pmod 4$.
Most is known for $n \equiv 0 \pmod 4$ (the \emph{Hadamard orders}), and
least for $n \equiv 3 \pmod 4$.\footnote{See 
{\DJ}okovic and Kotsireas~\cite{DK} for a recent
summary of what is known about the cases $n \equiv 2 \pmod 4$,
and Brent \emph{et al}~\cite{rpb244} for the cases $n \equiv 1 \pmod 2$.}
  Maxdet matrices are known for
orders $1$ through $21$ inclusive; for $n=22$
and $n=23$ there are conjectures (see~\cite{Orrick-www}) but as yet
no proofs of maximality.
Constructions exist for various infinite families and
ad-hoc examples, see~\cite{Osborn02}.

Consider the $k\times k$ submatrices ${S}_k(A)$ 
of an $n \times n$ $\{\pm 1\}$-matrix $A$,
where $0 < k \le n$.
For $M \in {S}_k(A)$, we say that $\det(M)$ is
a \emph{minor of order $k$} of~$A$. Usually only the absolute
value of $\det(M)$ is of interest.

One method for finding maxdet or large-determinant matrices involves
choosing a large-determinant matrix of a slightly smaller (or larger) order 
than the desired order, possibly perturbing it by a low-rank modification,
and adding (or removing) a small number of suitably chosen rows and columns.  
For example, Solomon~\cite{Orrick-www,Solomon02} found a (conjectured)
maxdet matrix of order $33$ and determinant $441 \times 2^{74}$
in this way by starting with an appropriate
Hadamard matrix of order $32$,
and in~\cite{rpb249,KMS00,LL} the method was used 
to obtain lower bounds on $D(n)$.

This motivates our interest in the
minors of maxdet matrices, and in particular the question:
\emph{What is the largest order of a maxdet matrix
contained as a proper submatrix of a given maxdet matrix?}
In this paper we answer this question for the maxdet
submatrices of maxdet matrices of orders $n \le 21$ by
computing all minors of maxdet matrices of these orders.  
Our work extends that of earlier researchers 
who have considered minors of Hadamard and maxdet matrices with other
applications in mind, such as the problem of growth in Gaussian
elimination~\cite{DP88,KLMS03,KMS01,KM08,SXKM03}.

Schmidt~\cite[pg.~441]{Schmidt70} says \emph{``The nature of the construction
$\cdots$ is in line with the computer assisted observation that binary
matrices with maximal determinants may not contain large order submatrices
with large determinants''}. Whether this is true in general depends on the
precise meaning of ``large''.  Certainly there are exceptions.
For example, the maxdet $\{\pm 1\}$-matrix 
of order~$17$ contains a maxdet (Hadamard) submatrix of order~$16$.

In Theorem~\ref{thm:excluded_interval} of \S\ref{sec:Sz} 
we give a new proof of an old result of Cohn~\cite{Cohn65} 
that a Hadamard matrix of order $n$ can not have
a proper Hadamard submatrix of order $m > n/2$. We then generalize the
proof to cover maxdet
submatrices of Hadamard matrices, and show, assuming the
Hadamard conjecture, that a Hadamard matrix of order $n$ can not have a
maxdet submatrix $M$ of order $m \ge n/2 + 5\log n$ unless $m \ge
n-2$ (see Theorem~\ref{thm:excluded_interval2}). 
In other words, $m$ can not lie in the interval
$[n/2+5\log n, n-2)$. Without the assumption of the Hadamard conjecture, 
we can still exclude an interval $[n/2 + o(n), n-o(n))$ of length $\sim
n/2$, see Theorem~\ref{thm:unconditional} and Remark~\ref{remark:BHP}.
These results partially confirm the remark of Schmidt quoted above.
However, they apply only to sub-matrices of Hadamard matrices. 
Except in small cases that are amenable to explicit computation, 
we have not excluded 
the possibility that a maxdet matrix of order 
$n \not\equiv 0 \pmod 4$ 
has a maxdet submatrix of any given order $m < n$.

In \S\ref{sec:numerics} we define two sequences 
related to the sets of minors of maxdet matrices,
and give the first $21$ terms of each sequence.

In \S\ref{sec:data} we describe the minors that occur in maxdet
matrices of orders $1$ through $21$, and make some observations
on the patterns that occur. We mention a result
(Proposition~\ref{prop:small_minors}), on small minors of Hadamard
matrices, which was suggested by the data before a proof was found.

Motivated by Tur\'an's result~\cite{Turan40} 
that the expected value of $\det(A)^2$
is $m!$ for random $\{\pm 1\}$ matrices of order $m$,
we consider the mean value of $\det(M)^2$ over all $m \times m$ submatrices
of maxdet matrices of order $n \ge m$, and conjecture
that it is bounded below by $m!$ (see Conjecture~\ref{conj:conj_lowerbd}).
The conjecture is consistent with our computations 
for $1 \le m \le n \le 21$.

Finally, 
in \S\ref{sec:alg} we describe the algorithms that we used to compute minors
of square $\{\pm 1\}$-matrices, as well as some related algorithms that were
considered but rejected for various reasons.

\pagebreak[3]

\subsubsection*{Hadamard equivalence and HT-equivalence%
\footnote{There seems to be no
widely-accepted name for this concept. 
In~\cite{rpb245} HT-equivalence is called ``extended Hadamard equivalence''.
Wanless~\cite{Wanless05}
calls an HT-equivalence class a ``resemblance class''.}
}

Two $\{\pm 1\}$-matrices 
are \emph{Hadamard equivalent} if one can be obtained from
the other by negating rows or columns, and/or by interchanging rows or columns.
Clearly the answers to the questions posed above about minors 
are the same for all matrices in a
Hadamard equivalence class, and also for any matrix $A$ and its transpose
(dual) $A^T$.  Hence, it is useful to define the notion of 
\emph{HT-equivalence}
by saying that two matrices $A$ and $B$ are \emph{HT-equivalent}
if $A$ is Hadamard-equivalent to $B$ \emph{or}
$A$ is Hadamard-equivalent to $B^T$.
For example, there are~$5$ distinct
Hadamard equivalence classes of Hadamard matrices of order~$16$, but two of
these classes contain matrices that are dual to those in the other class, so there are 
only $4$ distinct HT-equivalence classes.

\pagebreak[3]

\section{Excluded minors of Hadamard matrices} \label{sec:Sz}

In this section we consider the possible orders of submatrices $M$
of a Hadamard matrix $H$, satisfying the condition that $M$ is
Hadamard (see Theorem~\ref{thm:excluded_interval}) or, more generally, 
that $M$ is a maxdet matrix (see
Theorems~\ref{thm:excluded_interval2}--\ref{thm:unconditional}).

Recently Sz\"oll\H{o}si~\cite[Proposition 5.5]{Szollosi10} established
an elegant correspondence between the minors of order $m$ and of order $n-m$
of a Hadamard matrix of order $n$.  His result applies to complex Hadamard
matrices, of which $\{\pm 1\}$ Hadamard matrices are a special case.
More precisely, if $d+m = n$, $0 < d < n$, then 
corresponding to each $m\times m$ submatrix with determinant $\mu$ there is
a $d\times d$ submatrix with determinant $\pm n^{n/2 - d}\mu$. 
Only special cases (for small~$d$ or $m$) were known before
Sz\"oll\H{o}si
(see for example~\cite{DP88,KMS01,SXKM03,Sharpe07}).
This is perhaps surprising as 
Sz\"oll\H{o}si's critical Lemma~5.7 follows in a straight\-forward manner 
from Jacobi's determinant identity~\cite{Jacobi}.
We use the following corollary of Sz\"oll\H{o}si's theorem.%
\footnote{Corollary \ref{cor:sz} is essentially the same as Theorem~3 of
Cohn~\cite{Cohn65}, the difference being that Cohn replaces $D(n-m)$
by the Hadamard bound.  
However, Cohn's proof is quite different from ours.}
\begin{corollary} \label{cor:sz}
Suppose that a Hadamard matrix $H$ of order $n$ has an $m\times m$ submatrix
$M$, where $n/2 \le m \le n$.  Then
\[|\det(M)| \le n^{m-n/2}D(n-m)\,.\]
\end{corollary}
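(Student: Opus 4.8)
The plan is to deduce Corollary~\ref{cor:sz} directly from Sz\"oll\H{o}si's correspondence, which the excerpt has already described in words. Set $d = n - m$, so that $0 \le d \le n/2$ since $n/2 \le m \le n$; the boundary cases $d = 0$ (i.e. $m = n$, where $|\det M| = D(n) = n^{n/2}$ by Hadamard and the right side is $n^{n/2}D(0) = n^{n/2}$ with the convention $D(0) = 1$) and $d = n/2$ can be checked separately or simply absorbed into the argument. First I would invoke Sz\"oll\H{o}si's result: for each $m \times m$ submatrix $M$ of $H$ with $\det(M) = \mu$, there is a complementary $d \times d$ submatrix $M'$ of $H$ with $|\det(M')| = n^{n/2 - d}\,|\mu| = n^{m - n/2}\,|\mu|$, using $n/2 - d = n/2 - (n-m) = m - n/2$.

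The key observation is then that $M'$ is a $d \times d$ $\{\pm 1\}$-matrix, being a submatrix of the $\{\pm 1\}$ Hadamard matrix $H$. Hence $|\det(M')| \le D(d) = D(n-m)$ by the very definition of the maximal determinant function. Combining the two relations gives
\[
n^{m - n/2}\,|\det(M)| = |\det(M')| \le D(n-m),
\]
and dividing through by the positive quantity $n^{m-n/2}$ yields
\[
|\det(M)| \le n^{-(m-n/2)} D(n-m).
\]
Wait — that is the wrong direction; let me re-read the correspondence. The statement is that the $d \times d$ submatrix has determinant $\pm n^{n/2 - d}\mu$, i.e. $|\det(M')| = n^{n/2-d}|\mu|$. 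With $d = n-m \le n/2$ we have $n/2 - d = m - n/2 \ge 0$, so $|\det(M')| = n^{m-n/2}|\det(M)|$, and then $n^{m-n/2}|\det(M)| \le D(n-m)$ does give $|\det(M)| \le n^{-(m-n/2)}D(n-m)$, which contradicts the claimed bound. So the correct reading must be the reciprocal: it is $M$ that is inflated relative to $M'$, i.e. $|\det(M)| = n^{\,\text{something}}\,|\det(M')|$. Indeed, thinking of $M$ as the ``large'' block, Jacobi's identity relates $\det M$ to $\det(H)\det(M')/(\pm n^{\cdots})$; since $|\det H| = n^{n/2}$, one gets $|\det(M)| = n^{n/2 - m}\cdot(\pm1)\cdot$ — this is getting circular. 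I will pin down the exponent carefully from Sz\"oll\H{o}si's Lemma~5.7 / Jacobi's identity directly rather than from the prose paraphrase: Jacobi's identity states that an $m \times m$ minor of $H$ equals $\det(H)$ times the complementary $(n-m)\times(n-m)$ minor of $H^{-1}$, up to sign, and $H^{-1} = n^{-1}H^T$, so a $(n-m)$-minor of $H^{-1}$ is $n^{-(n-m)}$ times the corresponding $(n-m)$-minor of $H^T$. Thus $|\det(M)| = |\det(H)|\cdot n^{-(n-m)}\cdot|\det(M')| = n^{n/2}\cdot n^{-(n-m)}\cdot|\det(M')| = n^{m - n/2}|\det(M')|$ where now $M'$ is a $(n-m)\times(n-m)$ submatrix of $H^T$, hence a $\{\pm1\}$-matrix, hence $|\det(M')| \le D(n-m)$. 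This gives exactly $|\det(M)| \le n^{m-n/2}D(n-m)$.

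So the real structure of the proof is: (1) recall Jacobi's determinant identity expressing an $m$-minor of $H$ via the complementary $(n-m)$-minor of $H^{-1}$ up to sign; (2) substitute $H^{-1} = n^{-1}H^T$ and $|\det H| = n^{n/2}$ to extract the factor $n^{m-n/2}$ cleanly, identifying the complementary minor as that of a $\{\pm1\}$-matrix $M'$ of order $n-m$; (3) bound $|\det(M')| \le D(n-m)$ by definition of $D$; (4) handle the edge cases $m = n$ and $m = n/2$ (the latter requiring $D(n/2)$ on the right, which is automatic). The main obstacle — really the only subtlety — is getting the exponent and the sign bookkeeping in Jacobi's identity right, in particular correctly tracking which of $M$, $M'$ carries the power of $n$; a sloppy reading of the verbal description (as my false start above illustrates) flips the inequality. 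Since the paper explicitly grants Sz\"oll\H{o}si's Proposition~5.5, I can cite it rather than re-deriving Jacobi, and then the corollary is two lines: apply the correspondence and then the definition of $D(n-m)$.
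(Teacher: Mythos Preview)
Your final argument is correct and matches the paper's two-line proof: apply Sz\"oll\H{o}si's correspondence to obtain $|\det(M)| = n^{m-n/2}\,|\det(M')|$ for a complementary $(n-m)\times(n-m)$ submatrix $M'$ of $H$, then bound $|\det(M')|\le D(n-m)$ by definition. Your false start was not carelessness---the paper's prose paraphrase of Sz\"oll\H{o}si places the factor $n^{n/2-d}$ on the wrong side of the correspondence (as your $n=4$, $m=3$ sanity check would confirm), so re-deriving the relation from Jacobi's identity together with $H^{-1}=n^{-1}H^T$ and $|\det H|=n^{n/2}$ was exactly the right way to pin down which minor carries the power of~$n$.
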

\begin{proof}
By Sz\"oll\H{o}si's theorem,
$|\det(M)| = n^{n/2-d}|\det(M')|$, where $d = n-m$ and
$M'$ is some $d\times d$ submatrix of $H$.
Since $n/2-d = m-n/2$ and $|\det(M')| \le D(d)$ by the definition of $D$,
the corollary follows.
\end{proof}
The following lower bound on $D(m)$ is given 
in~\cite[Corollary~3]{rpb249}. % Only in v1
\begin{proposition} \label{prop:lowerbd1}
Assume the Hadamard conjecture.\footnote{It is sufficient to 
assume that Hadamard matrices of order $4k$ exist for all positive integers
$k \le (m+2)/4$. This is known to be true for $4k < 668$,
see~\cite{KT,SeYa92}.}
Then, for $m \ge 4$, we have $D(m) \ge 4m^{m/2-1}$.
\end{proposition}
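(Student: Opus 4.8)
The plan is to treat the four residue classes of $m$ modulo $4$ separately, comparing $D(m)$ in each non-Hadamard class with the maximal determinant at the nearest order divisible by $4$, which under the Hadamard conjecture equals the Hadamard bound. The case $m\equiv 0\pmod 4$ is immediate: a Hadamard matrix of order $m$ exists, so $D(m)=m^{m/2}=m\cdot m^{m/2-1}\ge 4m^{m/2-1}$ because $m\ge 4$. This is the only place the full strength of the conjecture is used; in the other classes one needs only Hadamard matrices of order $m-1$, $m-2$, or $m+1$.

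For $m\equiv 3\pmod 4$ I would start from a Hadamard matrix $H$ of order $n=m+1$ and delete one row and one column. By Sz\"oll\H{o}si's correspondence (the pairing of an $m\times m$ submatrix of $H$ with the complementary $1\times 1$ submatrix, cf.\ the discussion preceding Corollary~\ref{cor:sz}), \emph{every} such submatrix $M$ has $|\det M|=n^{n/2-1}=(m+1)^{(m-1)/2}$, so $D(m)\ge (m+1)^{(m-1)/2}$, and an elementary estimate gives $(m+1)^{(m-1)/2}\ge 4m^{m/2-1}$ once $m\ge 11$ — the extra factor $\sqrt{m+1}$ is what carries the bound past $4m^{m/2-1}$. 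The class $m\equiv 1\pmod 4$ behaves differently: deletion would force us to start from order $m+3$, and the resulting $3\times 3$-submatrix bound $D(m)\ge 4(m+3)^{(m-3)/2}$ decays like $e^{3/2}/\sqrt m$ relative to $4m^{m/2-1}$, so it is useless. Instead I would \emph{border} a Hadamard matrix $H$ of order $n:=m-1$ (and, for $m\equiv 2$, of order $n:=m-2$). Writing the bordered matrix as $\begin{pmatrix}H & R\\ S^{T} & B\end{pmatrix}$ and using $H^{-1}=n^{-1}H^{T}$, its determinant equals $\pm n^{n/2}\det\!\big(B-\tfrac1n S^{T}H^{T}R\big)$, so the task is to choose the $\{\pm1\}$ blocks so that the small Schur complement has large determinant. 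Taking the columns of $S$ to be copies of the all-ones vector with a single entry negated makes each column of $\tfrac1n HS$ have $\ell^{1}$-norm $3-4/n$; aligning $R$ with the signs of those columns and choosing $B$ to fix the off-diagonal signs then yields $D(m)\ge 4(m-2)(m-1)^{(m-3)/2}$ for $m\equiv 1$ and $D(m)\ge 8(2m-7)(m-2)^{(m-4)/2}$ for $m\equiv 2$, and both are easily checked to exceed $4m^{m/2-1}$ for all relevant $m$; indeed they are sharp at $m=5$ and $m=6$, recovering $D(5)=48$ and $D(6)=160$.

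This leaves exactly one recalcitrant value, $m=7$: here deletion from a Hadamard matrix of order $8$ gives only $8^{3}=512<4\cdot 7^{5/2}$. I would handle it by bordering the order-$4$ Hadamard matrix by three rows and columns; with $S$ chosen as above the $3\times 3$ Schur complement becomes a matrix with diagonal entries $3$ and off-diagonal entries $\pm1$, and a nearly circulant choice of the off-diagonal signs makes its determinant $36$, giving $D(7)\ge 16\cdot 36=576$, and $576\ge 4\cdot 7^{5/2}$. (One could equally just quote the tabulated values $D(5)=48$, $D(6)=160$, $D(7)=576$ and apply the clean constructions only for $m\ge 8$.)

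I expect the main obstacle to be precisely this low end of the range. Asymptotically every one of the bounds above clears $4m^{m/2-1}$ with room to spare, so the inequality is never really in doubt for large $m$; but for $m\in\{5,6,7\}$ the crude deletion bounds fall just short, and one must either produce the sharper bordering constructions — where the fiddly part is the sign bookkeeping needed to make the Schur complement large — or retreat to known small-order data. A secondary point to keep straight is that $m\equiv 1\pmod 4$ genuinely requires bordering a smaller Hadamard matrix rather than deleting from a larger one, since the nearest larger Hadamard order is both outside the range of the conjecture one wants to assume and, in any case, quantitatively too weak.
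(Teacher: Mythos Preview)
The paper does not prove Proposition~\ref{prop:lowerbd1}: it simply quotes the bound from~\cite[Corollary~3]{rpb249} and moves on. So there is no ``paper's own proof'' to compare against here; you have supplied a proof where the paper supplies only a citation.

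Your argument is sound. The case split modulo~$4$, with deletion from a Hadamard matrix of order $m+1$ for $m\equiv 3$ and bordering of Hadamard matrices of orders $m-1$, $m-2$ for $m\equiv 1,2$, is exactly the kind of construction used in~\cite{rpb249}, and your explicit lower bounds
\[
D(m)\ge (m+1)^{(m-1)/2},\qquad D(m)\ge 4(m-2)(m-1)^{(m-3)/2},\qquad D(m)\ge 8(2m-7)(m-2)^{(m-4)/2}
\]
are correct (I checked the Schur-complement computations; the $2\times2$ case does give $16-24/n$ with the asymmetric choice of $B$, hence the factor $8(2m-7)$, and the $3\times3$ circulant choice does give determinant~$36$ and hence $D(7)\ge 576$). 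The comparison of each of these with $4m^{m/2-1}$ is straightforward: the ratios tend to $\infty$, $\infty$, and $4/e$ respectively, and checking the first admissible value in each class ($m=11$, $m=5$, $m=6$) suffices because the ratios are monotone. Your handling of the exceptional case $m=7$ is also correct and in fact recovers $D(7)$ exactly.

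Two minor remarks on exposition. First, you say the Hadamard case $m\equiv 0$ is ``the only place the full strength of the conjecture is used''; but for $m\equiv 3$ you need a Hadamard matrix of order $m+1$, which is the \emph{largest} order invoked and is precisely why the footnote reads $k\le (m+2)/4$ rather than $k\le m/4$. Second, the phrase ``both are easily checked to exceed $4m^{m/2-1}$ for all relevant $m$'' would, in a finished write-up, deserve the one-line monotonicity argument or the observation that the ratio at the smallest admissible $m$ already exceeds~$1$ and the limiting ratio is $>1$; as written it is an assertion rather than a verification, though a true one.
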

Suppose that a Hadamard matrix $H$ of order $n$ has a maxdet
submatrix $M$ of order $m$.
Corollary~\ref{cor:sz} gives an upper bound on $|\det(M)|$,
and Proposition~\ref{prop:lowerbd1} gives a lower bound.
Theorems~\ref{thm:excluded_interval}--\ref{thm:excluded_interval2}
show that these bounds are incompatible for certain values of~$m$.
Theorem~\ref{thm:excluded_interval} considers the case that $M$ is
a Hadamard matrix, and Theorem~\ref{thm:excluded_interval2} considers
the more general case that $M$ is a maxdet matrix.
We have stated Theorem~\ref{thm:excluded_interval2} under the assumption
of the Hadamard conjecture, but a weaker result is provable without
this assumption~-- see Theorem~\ref{thm:unconditional}.

Theorem~\ref{thm:excluded_interval} was proved
by Cohn~\cite[Theorem~2]{Cohn65}, but we give a different proof
which generalizes to give proofs of 
Theorems~\ref{thm:excluded_interval2}--\ref{thm:unconditional}.

\pagebreak[3]
\begin{theorem}[Cohn] \label{thm:excluded_interval}
Let $H$ be a Hadamard matrix of order $n$ having a Hadamard
submatrix $M$ of order $m < n$. Then $m \le n/2$.
\end{theorem}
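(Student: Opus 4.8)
The plan is to argue by contradiction, playing the exact value of $|\det(M)|$ for a Hadamard matrix against the upper bound supplied by Corollary~\ref{cor:sz}. There is nothing to prove when $m \le n/2$, so suppose $m > n/2$ and put $d := n-m$, so that $1 \le d < n/2 < m$. Since $M$ is a Hadamard matrix of order $m$ we have the exact evaluation $|\det(M)| = m^{m/2}$, and since $n/2 \le m \le n$ Corollary~\ref{cor:sz} applies and gives $|\det(M)| \le n^{m-n/2}D(d)$. Feeding in the Hadamard bound $D(d) \le d^{d/2}$ and using $m-n/2 = (m-d)/2$ (which holds because $n=m+d$), then squaring, collapses everything to the single inequality
\[
  m^{m} \;\le\; n^{\,m-d}\, d^{\,d}.
\]

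The crux is to show this is impossible whenever $1 \le d < m$. I would dispatch it with the weighted arithmetic--geometric mean inequality, applied with weights $(m-d)/m$ and $d/m$ to the numbers $m+d$ and $d$:
\[
  n^{(m-d)/m}\, d^{\,d/m}
  \;\le\; \frac{m-d}{m}\,(m+d) + \frac{d}{m}\cdot d
  \;=\; \frac{m^{2}-d^{2}+d^{2}}{m}
  \;=\; m,
\]
with equality only if $m+d = d$, which is absurd; hence the inequality is strict, and raising it to the $m$-th power yields $n^{m-d}d^{d} < m^{m}$, contradicting the displayed bound. Therefore $m \le n/2$, as claimed. (Equivalently, after the substitution $t = d/m$ one is asserting that $g(t) := (1-t)\log(1+t) + t\log t$ is negative on $(0,1)$; this follows because $g''(t) = \frac{1-t}{t(1+t)^{2}} > 0$ there while $g$ vanishes at both endpoints, so $g$ is strictly convex with zero boundary values.)

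I do not expect a genuine obstacle here: the one step needing thought is the final estimate $m^{m} \le n^{m-d}d^{d}$, and weighted AM--GM settles it in a line. It is worth noting what the argument does and does not use: it relies on the \emph{exact} determinant $m^{m/2}$ of a Hadamard submatrix, so Proposition~\ref{prop:lowerbd1} plays no role in this theorem. The same scheme, with $m^{m/2}$ weakened to the lower bound $D(m) \ge 4m^{m/2-1}$ of Proposition~\ref{prop:lowerbd1}, is what will drive Theorem~\ref{thm:excluded_interval2}; in that generality the analogue of the displayed inequality no longer fails on all of $(n/2,n)$, which is exactly why only an interval bounded away from $n$ by a term of order $\log n$ can be excluded there.
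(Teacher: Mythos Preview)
Your proof is correct and follows the same overall route as the paper: assume $m>n/2$, apply Corollary~\ref{cor:sz} together with the Hadamard bound $D(d)\le d^{d/2}$, and reduce to a pure inequality that fails on the whole range. The difference lies only in how the final inequality is dispatched. The paper sets $x=m/n$, rewrites the inequality as $f(x):=x\ln x-(1-x)\ln(1-x)\le 0$, and shows $f>0$ on $(1/2,1)$ by computing $f''$. You instead normalise by $m$ and kill $m^{m}\le n^{m-d}d^{d}$ in one stroke with the weighted AM--GM inequality; your parenthetical convexity argument for $g(t)=(1-t)\log(1+t)+t\log t$ is the paper's calculus argument transported via the substitution $t=d/m=1/x-1$. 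Your AM--GM line is the more elementary of the two and avoids calculus entirely, which is a genuine simplification for this theorem. On the other hand, the paper's parametrisation by $x=m/n$ and the explicit function $f$ are chosen with an eye to Theorem~\ref{thm:excluded_interval2}, where the same $f$ reappears in the perturbed inequality $f(x)\le 2\ln(n/4)/n$ and its shape on $(1/2,1)$ is analysed further; the AM--GM shortcut does not extend so directly to that setting, as you yourself anticipate in your final paragraph.
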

\begin{proof}
The theorem is trivial if $m \le n/2$, 
so assume that $n > m > n/2$. Let $d = n-m$.
Since $M$ exists and $|\det(M)| = m^{m/2}$, 
Corollary~\ref{cor:sz} and Hadamard's bound for $D(d)$ give
\begin{equation} \label{eq:ineq2}
m^{m/2} \le n^{m-n/2}d^{d/2}\,.
\end{equation}
Squaring both sides of~\eqref{eq:ineq2},
we have $(m/n)^m \le (d/n)^d$. Taking $n$-th roots and defining
$x := m/n \in (0,1)$, we see that \[x^x \le (1-x)^{1-x}.\]
This inequality
is equivalent to $f(x) \le 0$, where $f:[0,1]\to\R$ is defined by
\[
f(x) = \begin{cases} x\ln x - (1-x)\ln(1-x) \;\text{ if }\; x \in (0,1),\\
	0 \text{ otherwise.}
        \end{cases} 
\]
It is easy to verify that $f(1/2) = 0$,
$f'(x) = 2 + \ln x + \ln(1-x)$,
and
\[
f''(x) = \frac{1-2x}{x(1-x)} < 0 \;\text{ in } (1/2, 1).
\]
Thus, $f(x) > 0$ in $(1/2,1)$, so we must have $x \le 1/2$ or $x = 1$.
The case $x=1$ is ruled out because it implies that $m=n$, contrary
to the assumption that $m<n$.
Thus $x \le 1/2$, which implies that $m \le n/2$.
\end{proof}
\begin{theorem} \label{thm:excluded_interval2}
Assume the Hadamard conjecture.
Let $H$ be a Hadamard matrix of order $n$ having a maxdet
submatrix $M$ of order $m < n$. Then 
$m < (n/2 + 5\ln n)$ or $m \ge n - 2$.
\end{theorem}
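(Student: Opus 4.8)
The plan is to play the upper bound on $|\det(M)|$ from Corollary~\ref{cor:sz} against the lower bound on $D(m)=|\det(M)|$ from Proposition~\ref{prop:lowerbd1}, and to show, via the same function $f$ that appears in the proof of Theorem~\ref{thm:excluded_interval}, that the two are incompatible unless $m$ falls outside the stated interval. We restrict to the nontrivial case $n/2<m\le n-3$ (otherwise $m\le n/2<n/2+5\ln n$, or $m\ge n-2$), and we may assume $n$ is large enough that $[\,n/2+5\ln n,\;n-3\,]$ contains an integer (for smaller Hadamard orders this interval is empty, so there is nothing to prove). Then $m>n/2\ge 4$, so Proposition~\ref{prop:lowerbd1} applies; put $d=n-m$, so $3\le d<n/2$.

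First I would chain the two bounds. From Proposition~\ref{prop:lowerbd1}, $|\det(M)|=D(m)\ge 4m^{m/2-1}$; from Corollary~\ref{cor:sz} together with Hadamard's bound for $D(d)$, $|\det(M)|\le n^{m-n/2}D(d)\le n^{m-n/2}d^{d/2}$. Hence
\[ 4m^{m/2-1}\le n^{m-n/2}\,d^{d/2}. \]
Squaring, using $2m-n=m-d$ and $n=m+d$, dividing through by $n^m$, and writing $x:=m/n\in(1/2,1)$, this becomes $(16/m^2)\,x^{nx}\le(1-x)^{n(1-x)}$; taking $n$-th roots and logarithms gives
\[ f(x)\le\frac{2\ln(m/4)}{n}<\frac{2\ln(n/4)}{n}<\frac{2\ln n}{n}, \]
where $f$ is the strictly concave function from the proof of Theorem~\ref{thm:excluded_interval}.

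It then remains to deduce $x<1/2+(5\ln n)/n$, i.e.\ $m<n/2+5\ln n$. On $(1/2,1)$ the function $f$ is strictly positive and strictly concave with $f(1/2)=0$ and $f(x)\to0$ as $x\to1$, so on any closed subinterval it attains its minimum at an endpoint. It therefore suffices to show that $f$ exceeds $\tfrac{2\ln(n/4)}{n}$ at the two endpoints of $I:=[\,1/2+(5\ln n)/n,\;1-3/n\,]$; this contradicts the displayed bound, forces $x\notin I$, and since $d\ge3$ implies $x\le1-3/n$, we conclude $x<1/2+(5\ln n)/n$. At the right endpoint the elementary inequality $(1-t)\ln(1-t)\ge-t$ gives $f(1-3/n)\ge(3\ln n-3-3\ln3)/n$, which beats $\tfrac{2\ln(n/4)}{n}$ once $\ln n>3+3\ln3-4\ln2\approx3.52$, hence for all relevant $n$. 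At the left endpoint, write $\epsilon:=(5\ln n)/n$; since $f''<0$, concavity gives $f(1/2+\epsilon)\ge\epsilon f'(1/2+\epsilon)=\epsilon\bigl(2+\ln(\tfrac14-\epsilon^2)\bigr)$. When $\epsilon$ is small this is at least $\tfrac25\epsilon=\tfrac{2\ln n}{n}$, because $f'(1/2)=2-2\ln2>\tfrac25$; when $n$ is only moderate, $\epsilon$ is bounded below, the point $1/2+\epsilon$ is bounded away from $1/2$, so $f(1/2+\epsilon)$ is bounded below by a fixed positive constant, which still dominates the decreasing quantity $\tfrac{2\ln(n/4)}{n}$.

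The delicate step will be this left-endpoint estimate: near $x=1/2$ the function $f$ vanishes with only a modest slope ($f'(1/2)=2-2\ln2\approx0.61$) and vanishing second derivative, so the linear lower bound $f(1/2+\epsilon)\gtrsim0.61\,\epsilon$ beats the target $\tfrac{2\ln n}{n}=\tfrac25\epsilon$ only by a bounded margin --- and it is exactly this that pins down the constant $5$. One must check that the cubic correction to $f$ at $1/2$, and the slack lost in replacing $\ln(m/4)$ by $\ln(n/4)$, never erode that margin; this is routine but calls for splitting into the tail where $\epsilon=(5\ln n)/n$ is small and the finitely many moderate Hadamard orders where it is not, the latter being dispatched by the crude observation that there $1/2+\epsilon$ stays bounded away from $1/2$.
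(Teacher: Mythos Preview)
Your route is the paper's up to the key inequality $f(x)\le 2\ln(n/4)/n$, and your right-endpoint estimate is essentially the paper's treatment of the case $d=3$.

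The left-endpoint argument, however, has a genuine gap. Your concavity bound $f(1/2+\epsilon)\ge\epsilon f'(1/2+\epsilon)$ is useful only while $f'(1/2+\epsilon)>0$, i.e.\ while $1/2+\epsilon<x_{\max}\approx0.84$. For the first Hadamard orders where the target interval is nonempty ($n=44,48,\ldots$) one has $\epsilon=(5\ln n)/n\in(0.36,0.43)$, so $1/2+\epsilon\in(0.86,0.93)$ --- past the peak of $f$ and close to~$1$, where $f$ is small. Your fallback, ``$1/2+\epsilon$ stays bounded away from $1/2$, so $f(1/2+\epsilon)$ is bounded below by a fixed positive constant'', is the wrong reason: $f$ vanishes at $1$ as well as at $1/2$, so distance from $1/2$ alone yields no lower bound. (The inequality does in fact hold at $n=44$, where $f(0.93)\approx0.119$ barely exceeds $2\ln 11/44\approx0.109$, but this must be \emph{verified}, not asserted.) Even at $n=80$ the concavity bound gives only $\epsilon f'(1/2+\epsilon)\approx0.070<0.075\approx2\ln(n/4)/n$, so neither branch of your dichotomy covers the range $44\le n\le 80$.

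The paper avoids this by never evaluating $f$ at a prescribed point. Instead it sets $\nu=n/2$, $\delta=m-\nu$, $z=\delta/\nu$, and rearranges the inequality into
\[
\delta\,(2-\ln4-\varepsilon(z))\le 2\ln(\nu/2),\qquad
\varepsilon(z)=\sum_{k\ge1}\frac{z^{2k}}{k(2k+1)}.
\]
Since $\varepsilon$ is increasing with $\varepsilon(z_{\max})<0.1803$, the coefficient of $\delta$ exceeds $0.4334$ uniformly on $(0,z_{\max})$, giving $\delta<2\ln(n/4)/0.4334<5\ln n$ for every $n\ge29$ in one stroke --- no case split and no numerical verification. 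Your argument can be salvaged by checking the ten or so moderate Hadamard orders individually, but the justification you wrote does not do the job.
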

\begin{proof}
The result is trivial if $m \le n/2$, and
$n/2 + 5\ln n > n-3$ for $n \le 28$, so we can assume that $m > n/2 > 14$. 
By Proposition~\ref{prop:lowerbd1} and 
Corollary~\ref{cor:sz}, we have
\begin{equation} 	\label{eq:ineq_stronger}
m^{m/2}\left(\frac{4}{m}\right) \le n^{m-n/2}d^{d/2},
\end{equation}
where $d = n-m$.
We prefer to use the slightly weaker inequality
\begin{equation}	\label{eq:ineq_weaker}
m^{m/2}\left(\frac{4}{n}\right) \le n^{m-n/2}d^{d/2}.
\end{equation}
Taking logarithms, and defining $x := m/n$ and 
$f$ as in the proof of Theorem~\ref{thm:excluded_interval},
we obtain
\begin{equation}	\label{eq:ineq_fn}
f(x) \le \frac{2\ln(n/4)}{n}\raisedot
\end{equation}
The right side of this inequality is positive (since $n > 4$) and 
independent of~$x$ (this is why we used~\eqref{eq:ineq_weaker} instead of
\eqref{eq:ineq_stronger}). We showed in the proof of 
Theorem~\ref{thm:excluded_interval} that $f(1/2) = f(1) = 0$
and $f''(x) < 0$ on $(1/2,1)$.
Let \[x_{\rm max} = (1 + \sqrt{1-4/e^2})/2 \approx 0.84\] be the 
(unique) point in $(1/2,1)$ at which $f'(x)$ vanishes.
Thus $f(x)$ attains its maximum value
at $x = x_{\rm max}$.
Since $2\ln(n/4)/n < 0.14 < f(x_{\rm max}) \approx 0.15$ for $n > 28$, the 
inequality~\eqref{eq:ineq_fn} is not satisfied for all $x \in (1/2,1)$,
and there is a unique interval $(x_0,x_1) \subseteq (1/2,1)$ on which
$f(x) > 2\ln(n/4)/n$, with $x_{\rm max} \in (x_0, x_1)$. 
(Here $x_0$ and $x_1$ depend on $n$ but not on $m$.)
It follows that there can not exist 
a maxdet submatrix of order $m$ with $x_0 < m/n < x_1$.

To locate $x_1$ we consider the case $d = n - m = 3$.
The inequality~\eqref{eq:ineq_weaker} gives
\begin{equation}	\label{eq:ineq28}
\left(\frac{n-3}{n}\right)^{n-3} \le \frac{27}{16n} \,\raisecomma
\end{equation}
but the left side of this inequality is bounded away from zero 
as $n \to \infty$,
whereas the right side tends to zero.  Thus, the inequality can not hold
for large $n$. In fact, a computation shows that
\eqref{eq:ineq28} can only hold for $n < 29$.
Thus, for $n \ge 29$ an interval $(x_0,x_1)$ as above exists,
with $x_1 > 1- 3/n$, so $m=n-3$ is not a possible order of a maxdet 
submatrix $M$.

We now show that $nx_0 < n/2 + 5\ln n$.
Define $\nu := n/2$,
$\delta := m - \nu$.  Thus $m = \nu + \delta$, 
$d = \nu - \delta$, 
and squaring the inequality~\eqref{eq:ineq_weaker} gives
\[(\nu+\delta)^{\nu+\delta}\left(\frac{2}{\nu}\right)^2
 \le (2\nu)^{2\delta} (\nu - \delta)^{\nu - \delta}\,,\]
or equivalently
\begin{equation}  \label{eq:ineq2a}
\left(\frac{\nu+\delta}{\nu-\delta}\right)^\nu \le
 \left(\frac{\nu}{2}\right)^2
 \left(\frac{4\nu^2}{\nu^2-\delta^2}\right)^\delta\,.
\end{equation}
Write $z := \delta/\nu = 2x-1$. We can assume that 
\[0 < z < z_{max} = 2x_{max}-1 = \sqrt{1-4/e^2} \approx 0.68\,.\]
Taking logarithms in~\eqref{eq:ineq2a} gives
\begin{equation} \label{eq:ineq2b}
\frac{\delta}{z}\ln\left(\frac{1+z}{1-z}\right) \le 
 \delta(\ln 4 - \ln(1-z^2)) +
 2\ln(\nu/2).
\end{equation}
Collecting the terms involving $\delta$ gives
\[
\delta(2-\ln4 - \varepsilon(z)) \le 2\ln(\nu/2),
\]
where 
\[
\varepsilon(z) = 2 - \frac{1}{z}\ln\left(\frac{1+z}{1-z}\right) - 
 \ln(1-z^2) = \sum_{k=1}^\infty \frac{z^{2k}}{k(2k+1)} 
\]
is monotonic increasing on $[0,z_{\rm max}]$, so
\[\varepsilon(z) \le \varepsilon(z_{\rm max}) < 0.1803,\]
and thus $2 - \ln 4 - \varepsilon(z) > 0.4334$ for $z \in (0, z_{\rm max})$.
It follows that
\[
\delta < \frac{2\ln(\nu/2)}{0.4334} < 5\ln n.
\]
\end{proof}

\begin{remark}
{\rm
We do not expect the values $m=n-1$ or $m=n-2$ to occur for $n>4$. They have
to be included as possibilities simply because the lower bound on
$D(n)$ given by Proposition~\ref{prop:lowerbd1} 
is too weak to exclude them. It is possible to have $m > n/2$, for
example a maxdet submatrix of order $m = n/2+1$ 
occurs in Hadamard matrices of orders $n = 4$ and $n = 12$.
}
\end{remark}

We can prove a result similar to, but weaker than,
Theorem~\ref{thm:excluded_interval2} without assuming the Hadamard
conjecture.  Let the \emph{prime gap function}
$\lambda(n)$ be the maximum gap between consecutive
primes $(p_i,p_{i+1})$ with $p_i \le n$. Then we have:
\begin{theorem}	\label{thm:unconditional}
Let $H$ be a Hadamard matrix of order $n\ge 4$ having a maxdet submatrix $M$
of order $m < n$, and let $\lambda(n)$ be the prime gap function.
Then there exist positive constants $c_1$, $c_2$ such
that $m < n/2 + c_1\lambda(n)\ln n$ or 
\hbox{$m \ge n - c_2\lambda(n)$}.
\end{theorem}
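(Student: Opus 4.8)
The plan is to mimic the proof of Theorem~\ref{thm:excluded_interval2}, but to replace the appeal to the Hadamard conjecture (used via Proposition~\ref{prop:lowerbd1}) with an unconditional lower bound on $D(m)$ that loses only a factor depending on the local prime gap. Specifically, given $m$, let $p$ be a prime with $m - 2\lambda(m) < p \le m$ such that a Hadamard matrix of order $p+1$ exists (for instance a Paley--type construction when $p \equiv 3 \pmod 4$, or order $p-1$ reachable similarly); more simply, use the fact that for some prime $q$ with $|q - m| = O(\lambda(n))$ there is a known Hadamard matrix of order close to $q$, hence $D(m) \ge D(m')$ for an $m' = m - O(\lambda(n))$ of Hadamard order, and $D(m') = (m')^{m'/2}$. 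Combining a bound of the shape $D(m) \ge m^{m/2}\exp(-c\,\lambda(n)\ln n)$ with Corollary~\ref{cor:sz} replaces the additive constant $2\ln(n/4)$ on the right of~\eqref{eq:ineq_fn} by something of order $\lambda(n)\ln n$.

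Concretely, I would carry out the following steps. First, establish an unconditional lemma: there is an absolute constant $c_0$ such that for all $m \ge 2$, $D(m) \ge m^{m/2}\,e^{-c_0\lambda(m)\ln m}$. This follows by taking a Hadamard (or near-Hadamard) matrix of order $m_0$ with $m - m_0 = O(\lambda(m))$ and bordering it, using the elementary fact that adding $j$ rows and columns to an $N\times N$ $\{\pm1\}$-matrix of determinant $\Delta$ can keep the determinant at least $\Delta$ (indeed a suitable bordering gives determinant at least $\Delta$, and $N^{j/2}\cdot 1 \le m^{m/2}e^{O(j\ln m)}$ absorbs the change in the $m^{m/2}$ term). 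Second, substitute this into Corollary~\ref{cor:sz}: if $H$ of order $n$ has a maxdet submatrix $M$ of order $m$ with $n/2 \le m < n$, then $m^{m/2}e^{-c_0\lambda(m)\ln m} \le |\det M| \le n^{m-n/2}D(n-m) \le n^{m-n/2}(n-m)^{(n-m)/2}$. Third, take logarithms, set $x = m/n$ and reuse the function $f$ from Theorem~\ref{thm:excluded_interval}: the inequality becomes $f(x) \le 2c_0\lambda(m)\ln m/n \le 2c_0\lambda(n)\ln n/n =: \epsilon_n$. Since $\lambda(n) = o(n/\ln n)$ (by the prime number theorem, or even crude bounds), $\epsilon_n \to 0$, so for large $n$ it is below $f(x_{\max}) \approx 0.15$ and the same convexity argument as before produces an excluded interval $(x_0, x_1) \subset (1/2,1)$. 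Fourth, bound $x_0$ and $x_1$: the lower bound $nx_0 < n/2 + c_1\lambda(n)\ln n$ comes from repeating the $\delta$-collecting computation of Theorem~\ref{thm:excluded_interval2} verbatim with $2\ln(\nu/2)$ replaced by $2c_0\lambda(n)\ln n$, and the statement $m \ge n - c_2\lambda(n)$ comes from locating $x_1$: set $d = n - m$, so $(1 - d/n)^{n-d} \le n^{d}\,d^{d/2}\cdot e^{c_0\lambda(m)\ln m}\cdot m^{-m/2}\cdot$\,(lower-order), and observe the left side is bounded away from $0$ while the right side is small unless $d = O(\lambda(n))$.

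The main obstacle will be Step~1 --- producing the clean unconditional lower bound $D(m) \ge m^{m/2}e^{-O(\lambda(n)\ln n)}$ and tracking that the implied constants are genuinely absolute. The subtlety is that the nearest order of known Hadamard existence is not simply ``the nearest multiple of $4$ below $m$'': one needs a constructive family (Paley, Sylvester, or the tabulated small orders) guaranteeing a Hadamard matrix of order within $O(\lambda(n))$ of $m$, and for that one invokes that between $m$ and $m - \lambda(m)$ there is a prime $p$, and $4\lceil p/4\rceil$ or a Paley order $p+1$ (when $p \equiv 3 \bmod 4$) is a Hadamard order; handling all residues of $p \bmod 4$ forces one to go to a prime in a slightly longer window, still $O(\lambda(n))$. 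Once this lemma is in hand, Steps~2--4 are routine adaptations of the computations already done for Theorem~\ref{thm:excluded_interval2}, with every occurrence of the constant $2\ln(n/4)$ systematically replaced by a term of order $\lambda(n)\ln n$, and Remark~\ref{remark:BHP} (referenced but not shown in this excerpt) presumably records that under a conjectural bound $\lambda(n) = O(\log^2 n)$ one recovers a near-polylogarithmic excluded interval comparable to Theorem~\ref{thm:excluded_interval2}.
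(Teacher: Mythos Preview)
Your plan is correct and is essentially the paper's own (sketched) proof: replace the conditional bound of Proposition~\ref{prop:lowerbd1} by an unconditional lower bound $D(m)\ge m^{m/2}\exp(-O(\lambda\ln n))$, then rerun the convexity argument of Theorem~\ref{thm:excluded_interval2} with the right-hand side of~\eqref{eq:ineq_fn} replaced by $O(\lambda(n)\ln n)/n$. The differences are cosmetic: the paper quotes the needed lower bound from~\cite{rpb249} (in the explicit form yielding $m^{m/2}(4/(ne))^{\lambda(n/2)/2}\le n^{m-n/2}d^{d/2}$) rather than rederiving it via Paley-type constructions as you outline; it also explicitly disposes of the finitely many small $n$ by enlarging $c_1,c_2$ so that the two alternatives cover $[4,n_0)$, a step you should add. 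One small correction: Remark~\ref{remark:BHP} records the \emph{unconditional} Baker--Harman--Pintz bound $\lambda(n)=O(n^{21/40})$ (hence an excluded interval of length $\sim n/2$), not a conjectural polylogarithmic estimate.
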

\begin{proof}[Sketch of proof.]
The proof is similar to that of Theorem~\ref{thm:excluded_interval2},
but
uses Theorem~1 and Corollary~1 of~\cite{rpb249} 
in place of Proposition~\ref{prop:lowerbd1}.
Thus~\eqref{eq:ineq_weaker} is replaced by
\[m^{m/2}\left(\frac{4}{ne}\right)^{\lambda(n/2)/2} \le n^{m-n/2}d^{d/2},\]
and~\eqref{eq:ineq_fn} by
\[f(x) \le \lambda(n/2)\,\frac{\ln(ne/4)}{n}\,\raisedot\]
The remainder of the proof follows that of
Theorem~\ref{thm:excluded_interval2}, except that some of the explicit
constants have to be replaced by $O(\lambda(n/2))$ terms, and we have
to assume that $n$ is sufficiently large, say $n \ge n_0$. 
At the end, we can increase
$c_1$ or $c_2$ if necessary to ensure that
$n/2 + c_1\lambda(n)\ln n >  n - c_2\lambda(n)$ for $4 \le n < n_0$.
\end{proof}

\begin{remark}	\label{remark:BHP}
{\rm
By a result of Baker, Harman and Pintz~\cite{BHP},
$\lambda(n) = O(n^{21/40})$, so the excluded interval 
in Theorem~\ref{thm:unconditional} has length $\sim n/2$.
}
\end{remark}

\begin{remark}
{\rm
It should be possible to sharpen
Theorems~\ref{thm:excluded_interval2}--\ref{thm:unconditional}
by using the (asymptotically sharper)
bounds on $D(m)$ given in~\cite{rpb253}
instead of the bound of Proposition~\ref{prop:lowerbd1}
(this is work in progress).
}
\end{remark}

\pagebreak[3]
\section{Sequences related to minors} \label{sec:numerics}

In this section we define two sequences related to the minors
of maxdet matrices, and give the first $21$ terms in each 
sequence~\cite{OEIS}.
In the following definitions, $\N$ denotes the positive integers.

For the convenience of the reader, Tables~\arabic{spectrum}--\arabic{maxdet}
give some data taken from Orrick and Solomon~\cite{Orrick-www}, where
references to the original sources may be found.
Table~\arabic{spectrum}
gives the \emph{spectrum} of possible (absolute values of)
determinants of $\{\pm 1\}$-matrices of order $n \le 11$, normalised by
the usual factor $2^{n-1}$. 
In this and other tables, the notation ``$a..b$'' is a shorthand for
``$\{x\in\N: a\le x\le b\}$''.
Table~\arabic{maxdet} gives $\Delta(n) := D(n)/2^{n-1}$
for $n \le 21$.

\pagebreak[3]\begin{table}[ht]
\begin{center}
\begin{tabular}{|c|l|}
\hline
$n$ & $\;\;\;\;\;\;$ Spectrum $\{|\det(A)|/2^{n-1}\}$\\
\hline
1 & $\{1\}$\\
2 & $\{0, 1\}$\\
3 & $\{0, 1\}$\\
4 & $\{0..2\}$\\
5 & $\{0..3\}$\\
6 & $\{0..5\}$\\
7 & $\{0..9\}$\\
8 & $\{0..18, 20, 24, 32\}$\\
9 & $\{0..40, 42, 44, 45, 48, 56\}$\\[2pt]
  & $\{0..102, 104, 105, 108, 110, 112,$\\[-4pt]
\raisebox{5pt}{10}   & $\;\;116, 117, 120, 125, 128, 144\}$\\[2pt]
   & $\{0..268, 270..276, 278..280, 282..286,$\\[-4pt]
\raisebox{5pt}{11}   & $\;\;288, 291, 294..297, 304, 312, 315, 320\}$\\
\hline
\end{tabular}
\caption{Spectrum of $\{\pm 1\}$-matrices of order $n \le 11$,}
from Orrick and Solomon~\cite{Orrick-www};
for $n=13$ see~\cite{rpb244}.
\end{center}
\squash
\end{table}

\pagebreak[3]\begin{table}[ht]
\begin{center}
\begin{tabular}{||c|c||c|c||c|c||c|c||}
\hline
$n$ 	& $\Delta(n)$ & $n$ & $\Delta(n)$ & $n$ & 
 $\Delta(n)$ & $n$ & $\Delta(n)$\\
\hline
--    	& --   	& 1 	& 1   	& 2	& 1	& 3	& 1\\
4	& 2	& 5	& 3	& 6	& 5	& 7	& 9\\
8	& $4\times 2^3$ & 9 & $7\times 2^3$ & 
 10 & $18\times 2^3$ & 11 & $40\times 2^3$\\
 12 &	$6\times 3^5$& 13 & $15\times 3^5$ &
 14 & $39\times 3^5$ & 15 & $105\times 3^5$ \\
 16 & $8\times 4^7$  & 17 & $20\times 4^7$ &
 18 & $68\times 4^7$ & 19 & $833\times 4^6$ \\
 20 & $10\times 5^9$ & 21 & $29\times 5^9$ &
 -- & -- & -- & -- \\
\hline
\end{tabular}
\caption{Maximal determinants $\Delta(n) = D(n)/2^{n-1}$, $n \le 21$,}
from Orrick and Solomon~\cite{Orrick-www}.
\end{center}
\end{table}

We are interested in when the full spectrum
of possible minor values
occurs in the minors of maxdet matrices of given order~$n$.
\begin{definition}	\label{defn:spec_A}
The \emph{full-spectrum threshold} of an $n \times n$ $\{\pm 1\}$ matrix $A$
is the maximum $m_f\le n$ such that the full spectrum of possible
values occurs for the minors of order $m_f$ of $A$.
\end{definition}
\begin{definition}	\label{defn:spec_n}
The \emph{full-spectrum threshold} $m_f:\N \to \N$ is 
the maximum of the full-spectrum threshold of $A$ over all 
maxdet matrices $A$ of order $n$.
\end{definition}
We write $m_f(A)$ or $m_f(n)$ to denote the full-spectrum thresholds
of Definitions~\ref{defn:spec_A} or \ref{defn:spec_n} respectively;
which is meant should be clear from the context.
The values of $m_f(n)$ for $1 \le n \le 21$ are given in
Table~\arabic{depth}.
We note that the full-spectrum threshold $m_f(A)$ does depend on the 
HT-equivalence class of $A$. For example, the four HT-equivalence
classes for order~$16$ give four different 
values $m_f \in \{5,6,7,8\}$, see
Tables~\arabic{order16a}--\arabic{order16d}.

For the reasons mentioned in \S\ref{sec:intro}, we are also 
interested in the largest order
of a maxdet matrix contained as a proper submatrix of a given 
maxdet matrix. We make some definitions analogous to
Definitions~\ref{defn:spec_A}--\ref{defn:spec_n}.
\begin{definition}	\label{defn:depth_A}
The \emph{complementary depth} of an $n \times n$ $\{\pm 1\}$ matrix $A$
is the maximum $m_d < n$ such that a maxdet matrix of order $m_d$
occurs as a proper submatrix of $A$, or $0$ if $n=1$.
The \emph{depth} of $A$ is $d(A) := n-m_d(A)$.
\end{definition}
\begin{definition}	\label{defn:depth_n}
The \emph{complementary depth} $m_d:\N \to \Z$ is the maximum of the
complementary depth of $A$ over all maxdet matrices $A$ of order~$n$.
The \emph{depth} $d:\N \to \Z$ is defined by $d(n) := n - m_d(n)$.
\end{definition}
We write $d(A)$ or $d(n)$ for the depths of Definitions~\ref{defn:depth_A}
or \ref{defn:depth_n} respectively;
similarly for $m_d(A)$ and $m_d(n)$.
Clearly $d(A)$ depends on the HT-equivalence class of $A$~--
for example, see Tables~\arabic{order18a}--\arabic{order18c}
for the three HT-equivalence classes of order~$18$ with depths
$7$, $7$ and $10$.
{From} Definition~\ref{defn:depth_n}, 
$d(n)$ is the minimum of $d(A)$ over all maxdet matrices
$A$ of order~$n$, so $d(18) = 7$.
Computed values of $d(n)$, $m_d(n)$ and $m_f(n)$ 
for $1 \le n \le 21$ are given
in Table~\arabic{depth}.
It is clear from the definitions that $m_f(n) \le m_d(n)$ for $n>1$.

\begin{table}[ht]
\begin{center}
\begin{tabular}{|c|cccccccccccc|}
\hline
$n$ & 1 & 2 & 3 & 4 & 5 & 6 & 7 & 8 & 9 & $\!10\!$ & $\!11\!$ & $\!12\!$ \\
\hline
$d$ & 1 & 1 & 1 & 1 & 1 & 1 & 1 & 4 & 1 & 2 & 3 & 5 \\
$m_d$ & 0 & 1 & 2 & 3 & 4 & 5 & 6 & 4 & 8 & 8 & 8 & 7 \\
$m_f$ & 1 & 1 & 2 & 2 & 3 & 4 & 6 & 4 & 6 & 6 & 7 & 6 \\ % 11(c) is 6
\hline
\end{tabular}
~\\[5pt]
\begin{tabular}{|c|ccccccccc|}
\hline
$n$ & 13 & 14 & 15 & 16 & 17 & 18 & 19 & 20 & 21\\
\hline
$d$ & 6 & 7 & 8 & 8 &  1 & 7 & 10 & 10 & 10\\
$m_d$ & 7 & 7 & 7 & 8 & 16 & 11 & 9  & 10 & 11\\
$m_f$ & 7 & 7 & 7 & 8 & 8 & 8 & 9  & 8 & 10\\ 
\hline
\end{tabular}
\caption{Depths $d(n)$ of largest maxdet proper}
submatrices, complementary depth $m_d(n)=n-d(n)$,\\
and full-spectrum threshold $m_{f}(n)$; see
Definitions~\ref{defn:spec_n}--\ref{defn:depth_n}.
\end{center}
\end{table}

If $n \equiv 0 \pmod 8$ then $d(n) = n/2$ in the range of our
computations.
If $n \equiv 4 \pmod 8$ then both $d(n) = n/2$ (for $n=20$)
and $d(n) = n/2-1$ (for $n=4, 12$) are possible.
We see from Table~\arabic{depth} 
that the computed values all satisfy $d(n) \le (n+1)/2$.
It is interesting that the value
$d=1$ occurs for $n \le 7$, $n=9$ and $n=17$ (contrary to the remark
of Schmidt quoted in~\S\ref{sec:intro}).

\pagebreak[3]
\section{Further results and observations on minors}\label{sec:data}

In Tables \arabic{order1}--\arabic{order21e},
which may be found in the Appendix at the end of the paper,
we give computational results on the minors of maxdet
matrices of order $n \le 21$. 
Here we make some empirical observations on the results
and mention a fact (Proposition~\ref{prop:small_minors}) 
that was suggested by them.

Let $k = \lfloor n/4 \rfloor$.
Factors of the form $k^{2k},k^{2k-1},\ldots,k^2, k$ 
are present as we descend through the
minors of maxdet matrices of even order~$n$.
For Hadamard matrices this is an easy consequence of the 
Hadamard bound $n^{n/2}$ and Sz\"oll\H{o}si's theorem, 
but for $n \equiv 2 \pmod 4$
we do not have a simple explanation.
Factors of the form $k^{2k-1},k^{2k-2},\ldots,k$
are present in the minors of maxdet matrices
with order $n \equiv 1 \pmod 4$, 
and factors of the form $k^{2k-2},k^{2k-3},\ldots,k$ are present
if $n \equiv 3 \pmod 4$. 
It is an open question whether this behaviour persists for $n > 21$.
The observed divisibility properties are related to the structure
of the Gram matrices $A^TA$ of maxdet matrices $A$, but
in general this structure is unknown. For a summary of what is currently
known, see~\cite{Orrick-www}.

The presence of high powers of $k=\lfloor n/4\rfloor$ 
in the minors of order $m$, and high powers of a
possibly different integer $k'=\lfloor m/4 \rfloor$
in the maximal determinants of order $m$, gives one
explanation of why certain minors can not meet the maximal
determinant for that order.\footnote{Of course, 
this begs the question of \emph{why}
the maximal determinants are divisible by a high power of $k'$~-- we do not
have a convincing explanation for this unless the order $m$ is
such that the Hadamard, Barba~\cite{Barba33}, or 
Ehlich-Wojtas~\cite{Ehlich64a,Wojtas64}
bound is achieved, 
in which case it follows from the form of the relevant bound, 
see Osborn~\cite[pp.~98--99]{Osborn02}.}
For example, Table~\arabic{order12} shows that 
a Hadamard matrix of order $n=12$
has minors of order $11$ with scaled
value $3^5$, but
$\Delta(11) = 5\times 2^6$, which contains a high power of $2$,
not of $3$.  

Proposition~\ref{prop:mean_detsq} is from~\cite[Theorem 1]{rpb250}.
The upper bound is sharp because it is attained for Hadamard matrices.
For the case that $A$ is a Hadamard matrix, the result
is due to de Launey and Levin~\cite[Proposition~2]{LL}.
\begin{proposition}	\label{prop:mean_detsq}
Let $A$ be a square $\{\pm 1\}$ matrix of order $n \ge m > 1$. 
Then the mean value
$E(\det(M)^2) $
of $\det(M)^2$, taken over all $m\times m$ submatrices $M$ of $A$,
satisfies
\begin{equation}	\label{eq:mean_sq_upperbd}
E(\det(M)^2) \le n^m\Big/\binom{n}{m}\,.
\end{equation}
Moreover, equality holds in~\eqref{eq:mean_sq_upperbd} 
iff $A$ is a Hadamard matrix.
\end{proposition}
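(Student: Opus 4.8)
The plan is to compute the sum $\sum_{M} \det(M)^2$ over all $m\times m$ submatrices $M$ of $A$ using the Cauchy--Binet formula, and then to compare with the trace of $(A^TA)^m$ via the arithmetic mean--geometric mean inequality on eigenvalues. First I would fix an $m$-subset $R$ of rows of $A$; writing $A_R$ for the corresponding $m\times n$ submatrix, the Cauchy--Binet formula gives
\[
\det(A_R A_R^T) = \sum_{C} \det(M_{R,C})^2,
\]
where the sum is over all $m$-subsets $C$ of columns and $M_{R,C}$ is the resulting $m\times m$ minor. Summing over all $\binom{n}{m}$ choices of $R$, I obtain
\[
\sum_{M} \det(M)^2 = \sum_{R} \det(A_R A_R^T).
\]
Next I would apply Cauchy--Binet a second time to each term $\det(A_R A_R^T)$, now regarding it as a sum over $m$-subsets $S$ of the ``inner'' index set $\{1,\dots,n\}$ of products of $m\times m$ minors of $A_R$ and of $A_R^T$; after summing over $R$ and using the symmetry, $\sum_R \det(A_R A_R^T)$ is an elementary symmetric function of the eigenvalues of $A^TA$, namely $e_m(\lambda_1,\dots,\lambda_n)$ where $\lambda_i$ are the eigenvalues of the $n\times n$ Gram matrix $A^TA$.

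With this identity in hand, the mean value is $E(\det(M)^2) = e_m(\lambda)/\binom{n}{m}$. Since $A$ is a $\{\pm 1\}$ matrix, each diagonal entry of $A^TA$ is $n$, so $\operatorname{trace}(A^TA) = \sum_i \lambda_i = n^2$, hence the average eigenvalue is $n$; also every $\lambda_i \ge 0$ since $A^TA$ is positive semidefinite. By Maclaurin's inequality (a consequence of AM--GM applied to the eigenvalues), $e_m(\lambda)/\binom{n}{m} \le (e_1(\lambda)/n)^m = n^m$, which is exactly~\eqref{eq:mean_sq_upperbd}. For the equality case, Maclaurin's inequality is an equality precisely when all the $\lambda_i$ are equal, i.e. $\lambda_1 = \cdots = \lambda_n = n$, which says $A^TA = nI$, i.e. $A$ is Hadamard; conversely if $A$ is Hadamard then $\det(M)^2$ has constant value... no, rather $e_m(\lambda) = \binom{n}{m} n^m$ directly, so equality holds. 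I would spell out the ``iff'' carefully here, since the forward direction needs the strictness of Maclaurin/AM--GM away from the equal-coordinates case.

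The main obstacle I anticipate is the careful bookkeeping in the double application of Cauchy--Binet to show that $\sum_{M}\det(M)^2$ equals the symmetric function $e_m$ of the eigenvalues of $A^TA$, rather than something involving only the rows or only the columns; getting the index sets and the symmetrization exactly right is the crux. An alternative, perhaps cleaner, route that avoids the second Cauchy--Binet is to use the identity $\sum_{M}\det(M)^2 = $ (sum of $m\times m$ principal minors of $A^TA$) $= e_m(\lambda)$ directly: the sum of all $m\times m$ principal minors of any symmetric matrix is the coefficient of $t^{n-m}$ in its characteristic polynomial, which is $e_m$ of its eigenvalues. Combining this with the single Cauchy--Binet step $\sum_C \det(M_{R,C})^2 = \det(A_R A_R^T)$ and then summing over $R$ — noting that $\sum_R \det(A_R A_R^T)$ is the sum of $m\times m$ principal minors of $AA^T$, whose nonzero eigenvalues coincide with those of $A^TA$ — closes the argument. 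Either way, once the combinatorial identity is established, the inequality and equality analysis via Maclaurin's inequality is routine.
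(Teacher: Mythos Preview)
Your approach via Cauchy--Binet and Maclaurin's inequality is sound, and is indeed a standard route to this result. Note that the present paper does not actually prove Proposition~\ref{prop:mean_detsq}: it simply cites~\cite[Theorem~1]{rpb250}. So there is no in-text argument to compare against, only your proposal to check on its own merits.

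There is one bookkeeping slip. The total number of $m\times m$ submatrices of $A$ is $\binom{n}{m}^2$ (an independent choice of $m$ rows and $m$ columns), not $\binom{n}{m}$, so the correct identity is
\[
E\bigl(\det(M)^2\bigr) \;=\; \frac{1}{\binom{n}{m}^{2}}\sum_{M}\det(M)^2 \;=\; \frac{e_m(\lambda)}{\binom{n}{m}^{2}}\,,
\]
where $\lambda=(\lambda_1,\ldots,\lambda_n)$ are the eigenvalues of $AA^T$ (equivalently of $A^TA$). Maclaurin's inequality for the nonnegative reals $\lambda_i$ gives $e_m(\lambda)\big/\binom{n}{m} \le \bigl(e_1(\lambda)/n\bigr)^m = n^m$, and dividing through by the extra factor $\binom{n}{m}$ yields exactly~\eqref{eq:mean_sq_upperbd}. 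With this correction your argument goes through cleanly. The ``alternative route'' you sketch at the end is in fact the tidy way to organise it: a single application of Cauchy--Binet gives $\sum_C\det(M_{R,C})^2=\det(A_RA_R^T)$, summing over $R$ gives the sum of all $m\times m$ principal minors of $AA^T$, and that sum equals $e_m(\lambda)$ as the coefficient of $t^{n-m}$ in the characteristic polynomial. The equality analysis is also correct: for $m\ge 2$, Maclaurin is strict unless all $\lambda_i$ coincide, and since $\operatorname{tr}(A^TA)=n^2$ this forces $\lambda_i=n$ for all $i$, hence $A^TA=nI$.
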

\begin{remark}
{\rm
For random $\{\pm1\}$ matrices $M$ of order~$m$, the expected value
of $\det(M)^2$ is $m!$, by a result of Tur\'an~\cite{Turan40}.
The last sentence of
Proposition~\ref{prop:mean_detsq} implies that $E(\det(M)^2) \ge m!$
for the order-$m$ submatrices $M$ of a Hadamard matrix,
with strict inequality if $m>1$.
}
\end{remark}
As a check on the correctness of our programs,
we computed the mean value of $\det(M)^2$
for submatrices of order $m$ of maxdet
matrices of order \hbox{$n \le 21$}, and $2 \le m \le n$.
The results agreed with
the predictions of Proposition~\ref{prop:mean_detsq}.
The following conjecture is consistent with our computations.
\begin{conjecture}	\label{conj:conj_lowerbd}
Let $A$ be a maxdet matrix.
Then the mean value $E(\det(M)^2)$ of $\det(M)^2$
taken over all $m\times m$ submatrices $M$ of $A$ satisfies the inequality
\begin{equation}	\label{eq:conj_lowerbd}
E(\det(M)^2) \ge m!
\end{equation}
Moreover, the inequality~\eqref{eq:conj_lowerbd} is strict for $m > 1$.
\end{conjecture}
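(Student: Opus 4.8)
The plan is to recast $E(\det(M)^2)$ as a ratio of elementary symmetric functions of the eigenvalues of $AA^T$, and then to exploit the constraints that maximality of $\det(A)$ places on those eigenvalues. Write $G = AA^T$ and let $\lambda_1,\dots,\lambda_n > 0$ be its eigenvalues, i.e.\ the squared singular values of $A$. For a set $R$ of $m$ rows, Cauchy--Binet gives $\sum_{|C|=m}\det(A[R,C])^2 = \det\big(A[R,:]\,A[R,:]^{T}\big)$, which is the $m\times m$ principal minor of $G$ indexed by $R$; summing over $R$ and using that the sum of all $m\times m$ principal minors of $G$ equals $e_m(\lambda_1,\dots,\lambda_n)$ (as seen from the characteristic polynomial of $G$), one obtains
\[
E(\det(M)^2) \;=\; \frac{1}{\binom{n}{m}^2}\sum_{|R|=m}\sum_{|C|=m}\det(A[R,C])^2 \;=\; \frac{e_m(\lambda)}{\binom{n}{m}^2}\raisedot
\]
Since $\sum_i\lambda_i = \operatorname{tr}(AA^T) = n^2$ while $\prod_i\lambda_i = \det(A)^2 = D(n)^2$ (here maximality of the determinant enters), Conjecture~\ref{conj:conj_lowerbd} is equivalent to the assertion that $e_m(\lambda) \ge m!\binom{n}{m}^2 = (n)_m\binom{n}{m}$, where $(n)_m := n(n-1)\cdots(n-m+1)$, with strict inequality for $m>1$. (For $m=1$ both sides equal $n^2$, the claimed equality case.)

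The second step disposes of moderate to large $m$ using only the two quantities $\sum_i\lambda_i$ and $\prod_i\lambda_i$. Put $p_k := e_k(\lambda)/\binom{n}{k}$; Newton's inequalities $p_{k-1}p_{k+1}\le p_k^2$ make $k\mapsto\log p_k$ concave, and Maclaurin's inequality gives $p_m^{1/m}\ge p_n^{1/n}$, so $e_m(\lambda)\ge \binom{n}{m}\,p_n^{m/n} = \binom{n}{m}\,D(n)^{2m/n}$. It then suffices to show $D(n)^2\ge (n)_m^{\,n/m}$. For $n$ a Hadamard order this is exactly $n^m\ge (n)_m$, which holds, strictly once $m\ge 2$. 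In general one inserts the lower bound $D(n)\ge 4n^{n/2-1}$ of Proposition~\ref{prop:lowerbd1} (or, without assuming the Hadamard conjecture, the weaker bound behind Theorem~\ref{thm:unconditional}) together with the elementary estimate $(n)_m\le n^m e^{-m(m-1)/2n}$; a short computation then gives $D(n)^2\ge (n)_m^{\,n/m}$, with room to spare, once $m$ exceeds roughly $4\ln n$, hence the conjecture (strictly, for $m>1$) in that range.

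The remaining case --- small $m$, say $m = O(\log n)$, all of which is already confirmed directly for $n\le 21$ --- is where I expect the real difficulty to lie. Here the two moments $e_1(\lambda)$ and $e_n(\lambda)$ are too crude: with the sharpest bounds on $D(n)$ currently available, $D(n)^{2m/n}$ still falls short of $(n)_m$ for such $m$, and the argument above breaks down. Closing the gap calls for the higher power sums $\operatorname{tr}\big((AA^T)^k\big)=\sum_i\lambda_i^k$; for $m=2$, for example, the conjecture is precisely
\[
\operatorname{tr}\big((AA^T)^2\big)\;\le\;2n^3-n^2,
\qquad\text{equivalently}\qquad
\sum_{i\ne j}\langle a_i,a_j\rangle^2\;\le\;n^2(n-1),
\]
where $a_1,\dots,a_n$ are the rows of $A$: the rows of a maxdet matrix should be \emph{no more correlated, on average, than those of a random $\{\pm 1\}$ matrix}, for which the right-hand side is exactly the expectation. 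Proving this --- and the analogous bounds on $\sum_i\lambda_i^k$ required for the next few values of $m$ --- appears genuinely hard: neither Hadamard's inequality (even in a quantitative ``stability'' form) nor the optimality condition from single-entry sign changes yields it directly, and what one really needs is information about the Gram matrix $A^{T}A$ of a maxdet matrix, whose structure is, as noted in \S\ref{sec:data}, unknown in general. A lower bound of the shape $D(n)^2\ge c\,n^n$ with $c$ bounded away from $0$ --- conjectured, and the subject of the work in progress mentioned at the end of \S\ref{sec:Sz} --- would, via the second step, reduce the whole conjecture essentially to the case $m=2$; but proving either that bound or the displayed $m=2$ inequality unconditionally seems to require new input about maximal determinant matrices.
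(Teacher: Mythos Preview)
The statement you are attempting to prove is labeled a \emph{Conjecture} in the paper, and the paper offers no proof of it: only the computational evidence in Table~\arabic{bounds_15_16} for $n\le 21$, together with the observation (in the Remark following Proposition~\ref{prop:mean_detsq}) that the Hadamard case follows from the equality clause of that proposition. There is therefore no ``paper's own proof'' to compare against.

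On the merits, your argument is correct as far as it goes, and it is a reasonable partial result. The Cauchy--Binet identity combined with the principal-minor expansion of $\det(tI-G)$ does give $E(\det(M)^2)=e_m(\lambda)/\binom{n}{m}^2$ with $\lambda$ the spectrum of $AA^T$; the Maclaurin step $e_m(\lambda)\ge\binom{n}{m}D(n)^{2m/n}$ is valid since $A$ is nonsingular (so all $\lambda_i>0$); and the reduction to $D(n)^2\ge (n)_m^{n/m}$ together with Proposition~\ref{prop:lowerbd1} really does dispose of $m\gtrsim 4\ln n$ (conditionally) and recovers the full Hadamard case exactly. None of this goes beyond what the paper already implicitly contains for Hadamard orders, but the extension to large $m$ in non-Hadamard orders is a genuine, if modest, addition.

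However, you yourself flag the gap, and it is real: for small $m$ (in particular $m=2$) your method gives nothing. The $m=2$ reformulation $\operatorname{tr}\big((AA^T)^2\big)\le 2n^3-n^2$, equivalently $\sum_{i\ne j}\langle a_i,a_j\rangle^2\le n^2(n-1)$, is precisely the content of the conjecture at that level, and you do not prove it --- you only restate it and explain why the available tools seem inadequate. Since the bulk of the difficulty lies in this regime (Maclaurin's inequality is far too lossy there, losing exactly the factor by which non-Hadamard $D(n)$ falls short of $n^{n/2}$), the proposal is an informed discussion of why the conjecture is hard rather than a proof. That is consistent with the paper's own stance: the inequality remains open.
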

Table~\arabic{bounds_15_16} gives some data for orders $13\le n\le 15$ 
to support
Conjecture~\ref{conj:conj_lowerbd}. In the table,
$R_{\rm L}(m,n)$ is the ratio of $E(\det(M)^2)$, for
submatrices $M$ of \hbox{order} $m$ of a maxdet matrix of \hbox{order}
$n$, to
the conjectured lower bound $m!$.
Similarly, $R_{\rm H}(m,n)$ is the \hbox{ratio}
of $E(\det(M)^2)$ to the upper bound~\eqref{eq:mean_sq_upperbd}.
We see that $R_{\rm L}(m,n) > 1$
for $2 \le m \le n$ (as conjectured), and the lower bound is reasonably
good for $m \le 5$, but deteriorates for larger $m$.  The upper bound is
within a factor of three of $E((\det M)^2)$ for all $m$. 
A similar pattern occurs for
all orders $n \le 21$, except that
$R_{\rm H}(m,n) = 1$ if $n$ is a Hadamard order,
in accordance with Proposition~\ref{prop:mean_detsq}.

\begin{table}[ht]	% \label{bounds_15_16}
\begin{center}
\begin{tabular}{|c|cc|cc|cc|}
\hline
$m$ & 
$R_{\rm L}(m,13)\!\!\!$ & $R_{\rm H}(m,13)$ &
$R_{\rm L}(m,14)\!\!\!$ & $R_{\rm H}(m,14)$ &
$R_{\rm L}(m,15)\!\!\!$ & $R_{\rm H}(m,15)$ \\
\hline
2 & 1.077 & 0.994	& 1.067 & 0.991	& 1.059		& 0.988\\
3 & 1.259 & 0.983	& 1.222 & 0.973	& 1.195		& 0.966\\
4 & 1.611 & 0.968	& 1.516 & 0.948	& 1.445		& 0.935\\
5 & 2.283 & 0.949	& 2.054 & 0.917	& 1.890		& 0.897\\
6 & 3.625 & 0.928	& 3.072 & 0.882	& 2.694		& 0.852\\
7 & 6.560 & 0.904	& 5.139 & 0.843	& 4.233		& 0.804\\
8 & 13.81 & 0.879	& 9.773 & 0.802	& 7.427		& 0.752\\
9 & 34.80 & 0.851	& 21.58 & 0.759	& 14.79		& 0.699\\
10 & 109.4 & 0.823	& 56.93 & 0.715	& 34.14		& 0.645\\
11 & 457.4 & 0.795	& 187.0 & 0.671	& 94.00		& 0.592\\
12 & 2864 & 0.765	& 815.6 & 0.627	& 321.7		& 0.540\\
13 & 35796 & 0.736	& 5318 & 0.584	& 1461		& 0.491\\
14 &       &		& 69137 & 0.542	& 9902		& 0.444\\
15 &       & 		& 	&	& 133638	& 0.399\\
\hline
\end{tabular}
~\\[2pt]
\caption{Ratio of $E(\det(M)^2)$ to lower and upper bounds, $13\le n\le 15$.}
For definitions of $R_{\rm L}(m,n)$ and $R_{\rm H}(m,n)$, see text.
\end{center}
\squash
\squash
\end{table}

The frequencies of occurrence
of small singular submatrices of Hada\-mard matrices are given
in the following Proposition~\cite[Corollary~$4$]{rpb250}, 
which was suggested by the computational
results before we found a proof. The case $m=2$ is implicit in a paper
of Little and Thuente~\cite[pg.~254]{LT}.
\pagebreak[3]
\begin{proposition}	\label{prop:small_minors}
Let $H$ be a Hadamard matrix of order $n$, and let $Z(m,H)$ be the number of
minors of order $m$ of $H$ that vanish.  Then
\begin{equation}	\label{eq:Z2}
Z(2,H) = n^2(n-1)(n-2)/8,  \;\;\text{and}
\end{equation}
\begin{equation}	\label{eq:Z3}
Z(3,H) = n^2(n-1)(n-2)(n-4)(5n-4)/288.
\end{equation}
\end{proposition}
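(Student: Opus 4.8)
The plan is to count the vanishing minors directly, after first describing exactly when a $2\times2$ or a $3\times3$ $\{\pm1\}$-matrix is singular and then exploiting the orthogonality of the rows of $H$. Recall that two distinct rows of $H$ are orthogonal, hence (for $n$ even) agree in exactly $n/2$ columns and disagree in the other $n/2$. A $2\times2$ $\{\pm1\}$-matrix has determinant $0$ iff its two rows are proportional (equal or antipodal); for a fixed pair of rows $i\neq j$ of $H$ and columns $\{k,\ell\}$ this happens iff $k$ and $\ell$ both lie in the agreement set or both in the disagreement set of rows $i,j$, giving $2\binom{n/2}{2}$ such column pairs. Summing over the $\binom n2$ pairs of rows yields $Z(2,H)=\binom n2\cdot 2\binom{n/2}{2}=n^2(n-1)(n-2)/8$, which is~\eqref{eq:Z2}.

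For~\eqref{eq:Z3} the first step is the lemma: \emph{a $3\times3$ $\{\pm1\}$-matrix has determinant $0$ if and only if two of its rows are proportional.} One direction is trivial. For the other, suppose the rows $u,v,w$ satisfy $\alpha u+\beta v+\gamma w=0$ with $(\alpha,\beta,\gamma)\neq 0$. If some coefficient vanishes, say $\gamma=0$, then $u$ and $v$ are proportional; otherwise write $w=au+bv$ with $a,b\neq 0$. If $u$ and $v$ were neither equal nor antipodal they would agree in at least one coordinate and disagree in at least one other, and evaluating $w=au+bv$ coordinatewise would force $|a+b|=1$ and $|a-b|=1$, hence $ab=0$, a contradiction; so $u=\pm v$.

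Granting the lemma, I count pairs $(R,C)$ of $3$-subsets of rows and of columns for which the $3\times3$ submatrix $H[R,C]$ of $H$ on rows $R$ and columns $C$ has two proportional rows. For such $R,C$ let $P_{ij}$ be the event that rows $i,j$ of $H$, restricted to $C$, are proportional; since any two of $P_{12},P_{13},P_{23}$ imply the third, inclusion--exclusion gives
\[
\mathbf 1[P_{12}\cup P_{13}\cup P_{23}] = \mathbf 1[P_{12}]+\mathbf 1[P_{13}]+\mathbf 1[P_{23}] - 2\cdot\mathbf 1[P_{12}\cap P_{13}\cap P_{23}].
\]
Summing over all $R$ and $C$ gives $Z(3,H)=T_1-2T_2$, where $T_1$ counts data consisting of an unordered pair of rows $\{i,j\}$, a third row $k\notin\{i,j\}$, and a $3$-set of columns $C$ on which $i,j$ agree throughout or disagree throughout, so $T_1=\binom n2(n-2)\cdot 2\binom{n/2}{3}=3\binom n3\cdot 2\binom{n/2}{3}$; and $T_2$ counts $(R,C)$ with all three rows of $R$ proportional on $C$. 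To evaluate $T_2$, fix rows $r_1,r_2,r_3$ and partition the $n$ columns into the four classes $S_{\varepsilon_2\varepsilon_3}=\{\,c:(r_2)_c=\varepsilon_2(r_1)_c,\ (r_3)_c=\varepsilon_3(r_1)_c\,\}$; the three pairwise orthogonality relations among $r_1,r_2,r_3$ translate into three linear equations which, together with the total $n$, force $|S_{\varepsilon_2\varepsilon_3}|=n/4$ for each class. All three rows are proportional on $C$ precisely when $C$ lies inside a single class, so $T_2=\binom n3\cdot 4\binom{n/4}{3}$. Hence $Z(3,H)=\binom n3\bigl(6\binom{n/2}{3}-8\binom{n/4}{3}\bigr)$; since $6\binom{n/2}{3}-8\binom{n/4}{3}=n(n-4)(5n-4)/48$, multiplying by $\binom n3=n(n-1)(n-2)/6$ gives $n^2(n-1)(n-2)(n-4)(5n-4)/288$, which is~\eqref{eq:Z3}.

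The one genuinely new point is the $3\times3$ singularity lemma; everything after it rests on the two elementary facts that orthogonal $\{\pm1\}$ rows agree in exactly half the columns and that any three rows of a Hadamard matrix split the columns into four classes of size $n/4$, followed by routine binomial-coefficient algebra. The mild subtlety to watch is the bookkeeping in the inclusion--exclusion step, namely that proportionality among $\{\pm1\}$ row-restrictions is transitive, so all three doubleton intersections of the $P_{ij}$ coincide with the triple intersection.
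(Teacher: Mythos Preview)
Your argument is correct. The $2\times2$ count is immediate, and for $m=3$ your singularity lemma, the inclusion--exclusion identity (using transitivity of proportionality among nonzero vectors), and the $n/4$--$n/4$--$n/4$--$n/4$ column partition for any three rows of $H$ are all sound; the binomial algebra checks out.

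Your route, however, differs from the one the paper points to. The paper does not prove the proposition here but cites \cite[Corollary~4]{rpb250}, and from context (Proposition~\ref{prop:mean_detsq} is Theorem~1 of the same reference) the argument there runs via second moments rather than direct counting: for $m\in\{2,3\}$ the determinant of a $\{\pm1\}$-matrix of order $m$ takes only the values $0$ and $\pm 2^{m-1}$, so the identity $E(\det(M)^2)=n^m\big/\binom{n}{m}$ for Hadamard $H$ immediately pins down the fraction of vanishing minors. That approach is shorter and explains at once why there is no analogue for $m\ge 4$ (multiple nonzero determinant values appear), and it also yields the lower bound mentioned in the subsequent remark for non-Hadamard matrices. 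Your approach, by contrast, is entirely self-contained: it does not appeal to the moment formula and gives a transparent combinatorial picture of \emph{which} submatrices vanish, at the cost of the case analysis in the $3\times3$ lemma and the inclusion--exclusion bookkeeping.
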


\pagebreak[3]
\begin{remark}
{\rm
There is no analogue of Proposition~\ref{prop:small_minors} for minors
of order $m > 3$, because the value of $Z(m,H)$ can depend on the
HT-equivalence class of $H$, so is not given by a polynomial in~$m$
(unless $m\le 3$). 
For example, the four HT-equivalence classes
of Hadamard matrices of order $16$ have $1717520$, $1712912$,
$1710608$, and $1709456$ vanishing minors of order~$4$.
For maxdet matrices of order $n \equiv 3 \pmod 4$, 
we sometimes find different numbers of vanishing
minors of order~$2$. For example, if $n=11$, we get
$1391$, $1389$, and $1401$ vanishing minors 
for the three HT-equivalence classes. 
The right-hand-side
of eqn.~\eqref{eq:Z2}, which 
by~\cite[Corollary~3.1]{rpb250} % Corollary 2 in the arXiv versions 
is a lower bound on the number of vanishing minors in this 
non-Hadamard case, gives $1362$ (rounded up). 
}
\end{remark}

\pagebreak[3]
Finally, we briefly consider the frequencies (or \emph{multiplicities})
with which the different
values of $|\det(M)/2^{n-1}|$ occur for minors of order $m$ of a maxdet
matrix of order~$n$.  In Table~\arabic{freqs_15_7} we give the results
of computations for $m = 7$, $n=15$, which gives the typical behaviour
that we have observed.\footnote{Data on multiplicities
for other values of $m$ and $n$ may be found
at~\cite{Brent-www}.} The second column gives the observed multiplicity of
a minor with $|\det(M)|/2^{n-1}$ equal to the integer in the first column.
The third column gives the multiplicities observed when taking a random sample
of $\binom{15}{7}^2 = 41409225$ uniformly distributed $\{\pm 1\}$-matrices
of order~$m$ (we call this the \emph{random model}).  
It is clear from the table that the actual distribution is
nothing like the distribution for the random model.  
A~$\chi^2$ test gives an absurdly
small probability $< 10^{-10^{10}}$ 
that the two samples were drawn from the same distribution.
Similar behaviour occurs for other values of $m \ge 2$.  For example,
when $m=2$ we find $5187$ zero minors
and $5838$ nonzero minors, but for random matrices of order~$2$
we expect zero and nonzero values to occur with equal probability.

\begin{table}[ht]	% freqs_15_7
\begin{center}
\begin{tabular}{|c|c|c|c|}
\hline  
		 & multiplicity & multiplicity in & \\[-5pt]
\raisebox{5pt}{$|\det(M)|/2^{n-1}|$} & of minors    & random model &
\raisebox{5pt}{ratio}\\
\hline
0 	& 12857784 	& 24030613	& 0.54\\
1	& 8402100	& 11140444	& 0.75\\
2	& 10831128	& 4662108	& 2.32\\
3	& 3483909	& 924336	& 3.77\\
4	& 3935280	& 504938	& 7.79\\
5	& 622842	& 76496		& 8.14\\
6	& 927162	& 55811		& 16.61\\
7	& 129576	& 7769		& 16.68\\
8	& 201900	& 6102		& 33.09\\
9	& 17544		& 608		& 28.86\\	
\hline
total	& 41409225	& 41409225	&  \\
\hline
\end{tabular}
~\\[2pt]
\caption{Comparison of observed multiplicities of minors of}
         order~$7$ in a maxdet matrix of order~$15$
	 with a random model
\end{center}
\squash
\end{table}

Table~\arabic{freqs_15_7} shows that the normalised minors are biased
towards even values. 
For the random model, this bias can be explained by
reducing to the $\{0,1\}$ case and considering the evaluation of the
determinant in $\Z/2\Z$. Then, for large~$n$, we expect even values to occur
about $71$\% of the time.%
\footnote{The precise constant in the limit as $n \to
\infty$ is $1-\prod_{k\ge 1}(1-2^{-k})$, see~\cite{rpb094,GR,Landsberg}.}
This prediction is in agreement with the data for the random model (the
third column). For the second column we find that even values occur about
$69$\% of the time, which is close to the prediction for the random model.
Thus, although the actual distribution differs considerably from that of the
random model, the bias towards even values persists. In comparison
with the random model, the extreme bias in favour of even values in the
tail of the distribution compensates for a bias against
zero minors.

\section{Algorithms for computing the set of minors} \label{sec:alg}

Recall that ${S}_k(A)$ is the set of $k\times k$ submatrices 
of an $n \times n$ $\{\pm 1\}$-matrix $A$,
where $0 < k \le n$.  In this section we describe algorithms for
enumerating all the minors of $A$. Our application is to maxdet
matrices, but the algorithms apply to all square $\{\pm 1\}$-matrices $A$,
and with trivial changes they would also apply to
rectangular $\{\pm 1\}$-matrices.

In our enumeration we consider only the absolute values of minors, 
normalised by the factor $2^{n-1}$ which always divides the determinant
of an $n \times n$ $\{\pm 1\}$-matrix,
so the \emph{set of minors} of $A$ is defined to be
the set \[{\cal S}(A) = \cup_{k=1}^n\{|\det(M)|/2^{n-1}: M \in {S}_k(A)\}.\]

There are $\binom{n}{k}$ possible choices of rows and $\binom{n}{k}$ possible
choices of columns for a minor of order $k$,
so altogether a total of
\[T_n := \sum_{k=1}^n {\binom{n}{k}}^2 = {\binom{2n}{n}} - 1 \sim
\frac{4^n}{\sqrt{\pi n}}\]
possibilities to consider when finding the set ${\cal S}(A)$ for an $n \times n$
matrix $A$.  In this section we consider four possible algorithms
(of which we used two) for
finding ${\cal S}(A)$.  Their running times all involve the factor $4^n$, so none
of them is practical for $n$ much larger than $20$, but they differ
significantly in the factor multiplying $4^n$ and in their space
requirements.  

Each algorithm has two variants: the first just determines the set ${\cal S}(A)$ of
minors; the second also counts the multiplicity of each minor, that is, how
often a given value $d\in {\cal S}(A)$ occurs. 
We describe the first variant of each
algorithm, and briefly mention the changes required for the second variant.

In the descriptions of the algorithms we explain how to compute the complete
set ${\cal S}(A)$; it should be clear how to compute the subset of ${\cal S}(A)$
corresponding to the minors of a given order~$k$.

\subsubsection*{Algorithm A}

\emph{Algorithm A} simply considers, for each $k$ in $\{1,2,\ldots,n\}$, 
the set of all $k\times k$ submatrices of $A$, 
and evaluates the determinant of each such submatrix $M$ by Gaussian
elimination with partial pivoting, using floating-point arithmetic. The
computed determinant is scaled by division by $2^{n-1}$ and rounded to the
nearest integer.

Clearly there is a danger that rounding errors during the process of
Gaussian elimination could lead to an incorrectly rounded integer result.
However, our experiments, using IEEE standard $64$-bit floating-point 
arithmetic~\cite{IEEE}, showed that this is not a problem
for the values of $n$ that we considered ($n \le 25$). Gaussian
elimination with partial pivoting is numerically 
stable~\cite{Higham,Wilkinson65}, 
and the maximum scaled determinant
of a $25 \times 25$ $\{\pm 1\}$-matrix is $42 \times 6^{11} = 15237476352$,
meeting the Barba bound~\cite{Barba33,Orrick-www}, so only requires
$34$ bits of precision, significantly fewer than the $53$ bits
provided by IEEE standard arithmetic.  As a precaution, our implementation
prints a warning and halts if the fractional part of a scaled determinant
exceeds $1/8$; this never occurred for $n \le 25$.

Gaussian elimination requires $O(k^3)$ arithmetic operations to evaluate
the determinant of a $k\times k$ matrix. As is traditional in numerical
analysis, we count multiplications/divisions but ignore
additions/subtractions. With this convention, Gaussian elimination requires
$k^3/3 + O(k^2)$ operations. Thus the total cost is
\[W_A \sim \sum_{k=1}^n {\binom{n}{k}}^2\frac{k^3}{3} 
 \sim \frac{4^n n^{5/2}}{24\sqrt{\pi}}\,.\]
The storage requirements of Algorithm~A are minimal, apart from the space
required to store the results, that is the set ${\cal S}(A)$ of minors and (if
required) their multiplicities. This is common to all the algorithms
considered~-- they all need space to store their results.

The set ${\cal S}(A)$ can be represented using one bit for each possible
value $|\det|/2^{n-1}$.  From the Hadamard bound, this requires at most
$2^{1-n}n^{n/2}+1$ bits. For example, if $n=20$, it requires $19531251$ bits
(2.33~MB). For $n=24$ it
requires 519~MB, which is still feasible. For the variant that counts
multiplicities, each bit needs to be replaced by an integer word (say $32$
bits or $4$ bytes), so the storage required would be acceptable for
$n = 20$ (75~MB) but excessive for $n=24$ ($16$~GB), since the computers
available to us typically have memories of $1$ to $4$~GB.

Fortunately, a much more economical representation of ${\cal S}(A)$ is usually
possible, because not all minors
in the range $[0,\lfloor 2^{1-n}n^{n/2}\rfloor]$ actually occur.  
The set ${\cal S}(A)$ is usually quite sparse, especially when the order $n$ of $A$ 
is divisible by $4$.
For example, with Hadamard matrices of order $16$, 
we have $\#{\cal S}(A) < 100$ (see \S\ref{sec:data}). Thus,
instead of using $131073$ bits to represent ${\cal S}(A)$, 
we can use a hash table
with say $200$ words~\cite{Knuth3}. With such an implementation, the
storage requirements are moderate for $n\le 25$. 

\subsubsection*{Algorithm B}

Algorithm B is similar to Algorithm A, but uses a rank-$1$ updating formula
to update the inverse and determinant of each $k \times k$ submatrix $B$
of $A$ if we already know the inverse and determinant of a submatrix that
differs from $B$ in only one row or column.  The inverse updating formula 
\[(B + uv^T)^{-1} = B^{-1} - (1 + v^TB^{-1}u)^{-1}B^{-1}uv^TB^{-1}\]
is known at the \emph{Sherman-Morrison} formula~\cite{SM}~-- the determinant
updating formula 
\[\det(B + uv^T) = \det(B)(1 + v^TB^{-1}u)\]
seems to be known only as a ``matrix determinant lemma''.

Since the updating steps require $\sim k^2$ operations, the complexity is 
\[W_B \sim {4^n n^{3/2}}/{\sqrt{\pi}}\,,\] 
so $W_A/W_B \sim n/24$.

We did not use Algorithm~B because the constant factors involved make it
slower than Algorithm~A for $n \le 20$, and because it is difficult to
guarantee a correctly rounded integer result due to possible numerical 
instability.  Also, even with exact arithmetic, 
we would have to use a different method whenever $B$ is singular.

\subsubsection*{Algorithm C}

Algorithm C uses integer arithmetic and evaluates each $k \times k$
determinant using the method (attributed to Laplace) 
of expansion by minors, see for example~\cite[Chapter 4]{Muir}.  
To compute a $k\times k$ minor $\det(B)$, 
we need to know the $(k-1)\times (k-1)$ minors formed by deleting
the first row and an arbitrary column of $B$. If these minors have been
saved from a previous computation, then the work involved in computing
one minor $|\det(B)|$ is only $k$ multiplications (by $\pm 1$) and $k-1$
integer additions, plus any overheads involved in retrieving the previously
stored values.  If we assume, for purposes of comparison with Algorithms A--B, 
that the work involved amounts to $k$ operations, then the total cost is
\[W_C = \sum_{k=1}^n \binom{n}{k}^2 k \sim 
 \frac{4^n n^{1/2}}{2\sqrt{\pi}}\,,\]
giving $W_A/W_C \sim n^2/12$.

Unfortunately, this algorithm has a potentially large memory requirement.
If we compute the minors of order $k$ in increasing order 
$k = 1, 2, \ldots, n$, then to compute the minors of order $k$
we need all $\binom{n}{k-1}^2$ minors of order $k-1$.  In the worst
case, when $k \approx n/2$, the memory required to store the minors
of order $k-1$ and $k$ is about $4^{n+1}/(\pi n)$ words, which is too large
to be practical for the values of $n$ that we wish to consider. More
memory-efficient implementations are possible, but complicated. For this
reason we discarded Algorithm~C and implemented a slightly slower, but much
simpler algorithm, Algorithm~D. 

\subsubsection*{Algorithm D}

The idea of Algorithm D is the same as that of Algorithm C.  However, when
computing the minors of order $k$ of an $n\times n$ matrix $A$, 
the outer loop runs over all $\binom{n}{k}$ combinations of $k$ rows of
$A$.  Having selected these $k$ rows, forming a $k \times n$ matrix $B$,
we now compute all minors of order $k$ of $B$. At the $j$-th step we
compute all minors of order $j$ in the last $j$ rows of $B$.  Thus the
number of operations (counting as for Algorithm~C) to compute the
minors of order $k$ of $B$ is
\[\sum_{j=1}^k \binom{n}{j}j\]
and the space requirement is at most 
$2\binom{n}{\lfloor n/2 \rfloor} \sim 2^{n+\frac32}/\sqrt{\pi n}$ words,
much less than for Algorithm~C.
The overall operation count is
\[\sum_{k=1}^n \binom{n}{k} \sum_{j=1}^k \binom{n}{j}j = 4^{n-1}n.\]
This is larger than the operation count for Algorithm~C by a
factor\linebreak
$\sim \sqrt{\pi n}/2$, but smaller than the operation count for
Algorithm~A by a factor $\sim n^{3/2}/(6\sqrt{\pi})$.

If a parallel implementation is desired, then it is easy to parallelise over
the outer loop~-- different processors can work on different combinations
of $k$ rows of $A$ in parallel.

We ran both Algorithms A and D on small cases to check the correctness of
our implementations. For the large cases we used mainly Algorithm~D,
which is much faster than Algorithm~A for the most time-consuming
cases ($k \approx n/2$), as expected from the operation counts given
above. 

\subsubsection*{Acknowledgements}
We thank Will Orrick for his advice and assistance in locating several
of the references, and Warren Smith for pointing out the connection between
Jacobi and Sz\"oll\H{o}si. We also thank an anonymous referee for
useful suggestions and corrections.

\pagebreak[3]

\bigskip
\hrule
\bigskip

\noindent 2010 {\it Mathematics Subject Classification}:
Primary 05B20; Secondary 15B34, 62K05.

\noindent \emph{Keywords: } 
Hadamard matrices, maximal determinant matrices, 
saturated D-optimal designs, minors.

\bigskip
\hrule
\bigskip

\noindent (Concerned with sequences
\seqnum{A215644},\seqnum{A215645},\seqnum{A003432},\seqnum{A003433}.) 

\bigskip
\hrule
\bigskip

\vspace*{+.1in}
\noindent

\bigskip
\hrule
\bigskip

\noindent

\pagebreak[3]
\subsection*{Appendix: Tables of minors for orders $\le 21$}

In Tables \arabic{order1}--\arabic{order21e}, 
$k = \lfloor n/4 \rfloor$, where $n$ is the order of
the $\{\pm 1\}$-matrix.  The first column gives the order $m$ of the minor,
$1 \le m \le n$. The second column gives the set of absolute values
of the minors of order $m$, divided by the known factor $2^{m-1}$.   
In this column the notation 
``$\{a,b,\ldots\}\times k^\alpha$'' is a shorthand for\linebreak
``$\{ak^\alpha, bk^\alpha,\ldots\}$'', etc.
For $n \in \{19,21\}$ we have abbreviated the entry in the second column by
giving only the minimum and maximum rather than the complete set,
using ``$({\rm min, max})$'' instead of ``$\{a,b,\ldots\}$''.
In such cases we write ``$(a,b)\times k^\alpha$'' 
instead of ``$(ak^\alpha,bk^\alpha)$''.

In the third column we give the scaled maximum
determinant $\Delta(m) = D(m)/2^{m-1}$ (redundant, but included for easy
comparison with the entries in the second column).
The fourth column answers whether some minor meets the maximum possible
determinant for its order (see Table~\arabic{maxdet}), 
and the last column answers
whether the full spectrum of possible values of minors
(as given in Table~\arabic{spectrum}) occurs.

If there is more than one HT-equivalence class for an order~$n$,
the classes are listed in the same order as they are given
in~\cite{Orrick-www}.
The information given in Tables \arabic{order1}--\arabic{order21e} is
sufficient to uniquely identify each HT-equivalence class.

To avoid making this Appendix
excessively long, we have omitted details such as the frequency
of occurrence of each minor value. Further information is available
on our website~\cite{Brent-www}.

\pagebreak[4]
\begin{table}[ht]		% n=1
\begin{center}
{\bf Orders 1--6}\\[20pt]
\begin{tabular}{||c|c|c|c|c||}
\hline
$m$ & $\{{\rm minors}\}$ & $\Delta(m)$ & max? & full?\\
\hline
1 & $\{1\}$ & 1 & yes &yes\\
\hline
\end{tabular}
\\[5pt]
Table 6: $n=1$, $k=0$, $m_f = 1$\\[10pt]
\begin{tabular}{|c|c|c|c|c|}
\hline
$m$ & $\{{\rm minors}\}$ & $\Delta(m)$ & max? & full?\\
\hline
2 & $\{1\}$ & 1 & yes &no\\
1 & $\{1\}$ & 1 & yes &yes\\
\hline
\end{tabular}
\\[5pt]
Table 7: $n=2$, $k=0$, $d=1$, $m_f = 1$\\[10pt]
\begin{tabular}{|c|c|c|c|c|}
\hline
$m$ & $\{{\rm minors}\}$ & $\Delta(m)$ & max? & full?\\
\hline
3 & $\{1\}$ & 1 & yes &no\\
1--2 & full spectrum & 1 & yes &yes\\
\hline
\end{tabular}
\\[5pt]
Table 8: $n=3$, $k=0$, $d=1$, $m_f=2$\\[10pt]
\begin{tabular}{|c|c|c|c|c|}
\hline
$m$ & $\{{\rm minors}\}$ & $\Delta(m)$ & max? & full?\\
\hline
4 & $\{2\}$ & 2 & yes &no\\
3 & $\{1\}$ & 1 & yes &no\\
1--2 & full spectrum & 1 & yes &yes\\
\hline
\end{tabular}
\\[5pt]
Table 9: $n=4$, $k=1$, $d=1$, $m_f=2$\\[10pt]
\begin{tabular}{|c|c|c|c|c|}
\hline
$m$ & $\{{\rm minors}\}$ & $\Delta(m)$ & max? & full?\\
\hline
5 & $\{3\}$ & 3 & yes &no\\
4 & $\{1,2\}$ & 2 & yes &no\\
1--3 & full spectrum & 1 & yes &yes\\
\hline
\end{tabular}
\\[5pt]
Table 10: $n=5$, $k=1$, $d=1$, $m_f=3$\\[10pt]
\begin{tabular}{|c|c|c|c|c|}
\hline
$m$ & $\{{\rm minors}\}$ & $\Delta(m)$ & max? & full?\\
\hline
6 & $\{5\}$ & 5 & yes &no\\
5 & $\{1..3\}$ & 3 & yes &no\\
1--4 & full spectrum & $\le 2$  & yes &yes\\
\hline
\end{tabular}
\\[5pt]
Table 11: $n=6$, $k=1$, $d=1$, $m_f=4$\\
\end{center}
\squash
\end{table}

\pagebreak[4]
\begin{table}[ht]
\begin{center}
{\bf Orders 7--11(a)}\\[10pt]
\begin{tabular}{|c|c|c|c|c|}
\hline
$m$ & $\{{\rm minors}\}$ & $\Delta(m)$ & max? & full?\\
\hline
7 & $\{9\}$ & 9 & yes &no\\
1--6 & full spectrum & $\le 5$ & yes &yes\\
\hline
\end{tabular}
\\[4pt]
Table 12: $n=7$, $k=1$, $d=1$, $m_f=6$\\[9pt]
\begin{tabular}{|c|c|c|c|c|}
\hline
$m$ & $\{{\rm minors}\}$ & $\Delta(m)$ & max? & full?\\
\hline
8 & $\{2\}\times 2^4$ & $2\times 2^4$ & yes &no\\
7 & $\{1\}\times 2^3$ & 9 & no & no\\
6 & $\{0,1\}\times 2^2$ & 5 & no & no\\
5 & $\{0,1\}\times 2^1$ & 3 & no & no\\
1--4 & full spectrum & $\le 2$ & yes &yes\\
\hline
\end{tabular}
\\[4pt]
Table 13: $n=8$, $k=2$, $d=4$, $m_f=4$\\[9pt]
\begin{tabular}{|c|c|c|c|c|}
\hline
$m$ & $\{{\rm minors}\}$ & $\Delta(m)$ & max? & full?\\
\hline
9 & $\{7\}\times 2^3$ & $7\times 2^3$ & yes &no\\
8 & $\{2,3,4,6,8\}\times 2^2$ & $8 \times 2^2$ & yes &no\\
7 & $\{0..4\}\times 2^1$ & 9 & no & no\\
1--6 & full spectrum & $\le 5$ & yes &yes\\
\hline
\end{tabular}
\\[4pt]
Table 14: $n=9$, $k=2$, $d=1$, $m_f=6$\\[9pt]
\begin{tabular}{|c|c|c|c|c|}
\hline
$m$ & $\{{\rm minors}\}$ & $\Delta(m)$ & max? & full?\\
\hline
10 & $\{9\}\times 2^4$ & $9 \times 2^4$ & yes &no\\
9 & $\{3,6\}\times 2^3$ & $7 \times 2^3$ & no & no\\
8 & $\{0..5,8\}\times 2^2$ & $8\times 2^2$ & yes &no\\
7 & $\{0..4\}\times 2^1$ & 9 & no & no\\
1--6 & full spectrum & $\le 5$ & yes &yes\\
\hline
\end{tabular}
\\[4pt]
Table 15: $n=10$, $k=2$, $d=2$, $m_f=6$\\[9pt]
\begin{tabular}{|c|c|c|c|c|}
\hline
$m$ & $\{{\rm minors}\}$ & $\Delta(m)$ & max? & full?\\
\hline
11 & $\{20\}\times 2^4$ & $20\times 2^4$ & yes &no\\
10 & $\{0, 2..8, 10, 12, 16\}\times 2^3$ & $18 \times 2^3$ & no & no\\
 & $\{0,2..4,6,8..10,12..24,$ &&&\\[-4pt]
\raisebox{5pt}{9}
&\,\,\,\,$26..33,36,40,48\}$ & \raisebox{5pt}{56} & \raisebox{5pt}{no} & 
 \raisebox{5pt}{no}\\
8 & $\{0..18,20,24\}$& 32 & no & no\\
1--7 & full spectrum & $\le 9$ & yes &yes\\
\hline
\end{tabular}
\\[4pt]
Table 16: $n=11$(a), $k=2$, $d=4$, $m_f=7$
\end{center}
\squash
\end{table}

\pagebreak[4]
\begin{table}[ht]
\begin{center}
{\bf Orders 11(b)--12}\\[20pt]
\begin{tabular}{|c|c|c|c|c|}
\hline
$m$ & $\{{\rm minors}\}$ & $\Delta(m)$ & max? & full?\\
\hline
11 & $\{20\}\times 2^4$ & $20\times 2^4$ & yes &no\\
10 & $\{0, 1, 4..6, 8..11, 14, 16\}\times 2^3$ & $18 \times 2^3$ & no & no\\
& $\{0,2..4,6,8..24,26..28,$ &&&\\[-4pt]
\raisebox{5pt}{9}
&$30..32,36,40,44,48\}$ & \raisebox{5pt}{56} & \raisebox{5pt}{no} & 
\raisebox{5pt}{no}\\
8 & $\{0..18,20,24\}$& 32 & no & no\\
1--7 & full spectrum & $\le 9$ & yes &yes\\
\hline
\end{tabular}
\\[5pt]
Table 17: $n=11$(b), $k=2$, $d=4$, $m_f=7$\\[20pt]
\begin{tabular}{|c|c|c|c|c|}
\hline
$m$ & $\{{\rm minors}\}$ & $\Delta(m)$ & max? & full?\\
\hline
11 & $\{20\}\times 2^4$ & $20\times 2^4$ & yes &no\\
10 & $\{4, 6, 8, 12, 16\}\times 2^3$ & $18 \times 2^3$ & no & no\\
9 & $\{0..4, 6, 8, 12\}\times 2^2$ & $14 \times 2^2$ & no & no\\
8 & $\{0..8, 12, 16\}\times 2^1$ & $16 \times 2^1$ & yes &no\\
7 & $\{0..8\}$ & 9 & no & no\\
1--6 & full spectrum & $\le 5$ & yes &yes\\
\hline
\end{tabular}
\\[5pt]
Table 18: $n=11$(c), $k=2$, $d=3$, $m_f=6$\\[20pt]
\begin{tabular}{|c|c|c|c|c|}
\hline
$m$ & $\{{\rm minors}\}$ & $\Delta(m)$ & max? & full?\\
\hline
12 & $\{2\}\times 3^6$ & $2 \times 3^6$ & yes &no\\
11 & $\{1\}\times 3^5$ & $5 \times 2^6$ & no & no\\
10 & $\{0,1\}\times 3^4$ & $3^2 \times 2^4$ & no & no\\
9 & $\{0,1\}\times 3^3$ & $7 \times 2^3$ & no & no\\
8 & $\{0..2\}\times 3^2$ & $2^5$ & no & no\\
7 & $\{0..3\}\times 3^1$ & $3^2$ & yes &no\\
1--6 & full spectrum & $\le 5$ & yes &yes\\
\hline
\end{tabular}
\\[5pt]
Table 19: $n=12$, $k=3$, $d=5$, $m_f=6$\\
\end{center}
\squash
\end{table}

\pagebreak[4]
\begin{table}[ht]
\begin{center}
{\bf Orders 13--15}\\[20pt]
\begin{tabular}{|c|c|c|c|c|}
\hline
$m$ & $\{{\rm minors}\}$ & $\Delta(m)$ & max? & full?\\
\hline
13 & $\{5\}\times 3^6$ & $5 \times 3^6$ & yes &no\\
12 & $\{2,3\}\times 3^5$ & $6 \times 3^5$ & no & no\\
11 & $\{0..3\}\times 3^4$ & $5 \times 2^6$ & no & no\\
10 & $\{0..4\}\times 3^3$ & $3^2 \times 2^4$ & no & no\\
9 & $\{0..5\}\times 3^2$ & $7 \times 2^3$ & no & no\\
8 & $\{0..6\}\times 3^1$ & $2^5$ & no & no\\
1--7 & full spectrum & $\le 9$ & yes &yes\\
\hline
\end{tabular}
\\[5pt]
Table 20: $n=13$, $k=3$, $d=6$, $m_f=7$\\[20pt]
\begin{tabular}{|c|c|c|c|c|}
\hline
$m$ & $\{{\rm minors}\}$ & $\Delta(m)$ & max? & full?\\
\hline
14 & $\{13\}\times 3^6$ & $13\times 3^6$ & yes &no\\
13 & $\{4, 6, 7, 9\}\times 3^5$ & $15 \times 3^5$ & no & no\\
12 & $\{0..7,9,10\}\times 3^4$& $18 \times 3^4$ & no & no\\
11 & $\{0..9,11\}\times 3^3$& $5\times 2^6$ & no & no\\
10 & $\{0..13\}\times 3^2$& $3^2\times 2^4$ & no & no\\
9 & $\{0..15\}\times 3^1$&$7\times 2^3$&no & no\\
8 & $\{0..18,20\}$& $2^5$ & no & no\\
1--7 & full spectrum & $\le 9$ & yes &yes\\
\hline
\end{tabular}
\\[5pt]
Table 21: $n=14$, $k=3$, $d=7$, $m_f=7$\\[20pt]
\begin{tabular}{|c|c|c|c|c|}
\hline
$m$ & $\{{\rm minors}\}$ & $\Delta(m)$ & max? & full?\\
\hline
15 & $\{35\}\times3^6$& $35 \times 3^6$& yes &no\\
14 & $\{7,8,12,14,17,18,21,23,27\}\times 3^5$ & $39\times 3^5$ & no & no\\
13 & $\{0..21,23,24,26,27\} \times 3^4$&$45 \times 3^4$&no & no\\
12 & $\{0..22,24..27\} \times 3^3$ &$54 \times 3^3$&no & no\\
11 & \hspace{2mm}$ \{0..29,31,35\} \times 3^2$ & $5\times 2^6$ & no & no\\
10 & \hspace{2mm}$ \{0..36,39,40\} \times 3^1$ & $3^2\times 2^4$ & no & no\\
9 & $\{0..36, 38..40, 42, 44, 45\}$ &56&no & no\\
8 & $\{0..18,20,24\} $ & 32 & no & no\\
1--7 & full spectrum & $\le 9$ & yes &yes\\
\hline
\end{tabular}
\\[5pt]
Table 22: $n=15$, $k=3$, $d=8$, $m_f=7$
\end{center}
\squash
\end{table}

\pagebreak[4]
\begin{table}[ht]
\begin{center}
{\bf Order 16}\\[18pt]
\begin{tabular}{|c|c|c|c|c|}
\hline
$m$ & $\{{\rm minors}\}$ & $\Delta(m)$ & max? & full?\\
\hline
16 & $\{2\}\times 4^8$&$2 \times 4^8$&yes &no\\
15 & $\{1\}\times 4^7$& $35 \times 3^6$& no & no\\
14 & $\{0, 1\}\times 4^6$ & $13\times 3^6$ & no & no\\
13 & $\{0,1\}\times 4^5$ &$5 \times 3^6$&no & no\\
12 & $\{0..2\}\times 4^4$&$2 \times 3^6$& no & no\\
11 & $ \{0..3\}\times 4^3$ & $5\times 2^6$ & no & no\\
10 & $\{0..2\}\times 2^5$ & $9\times 2^4$ & no & no\\
9 & $ \{0..2\}\times 2^4$ &$7\times 2^3$& no & no\\
8 & $\{0..4\}\times 2^3$ & 32 & yes &no\\
7 & $\{0..2\}\times 2^2$ & 9 & no & no\\
6 & $ \{0..2\}\times 2^1$ & 5 & no & no\\
1--5 & full spectrum & $\le 3$ & yes &yes\\
\hline
\end{tabular}
\\[5pt]
Table 23: $n=16$(a), $k=4$, $d=8$, $m_f=5$\\[10pt]
\begin{tabular}{|c|c|c|c|c|}
\hline
$m$ & $\{{\rm minors}\}$ & $\Delta(m)$ & max? & full?\\
\hline
11--16 & as for $16$(a) & -- & -- & no\\
10 & $\{0..5\}\times 4^2$ & $9\times 4^2$ & no & no\\
9 & $\{0..4\}\times 2^3$ &$7\times 2^3$& no & no\\
8 & $\{0..6,8\}\times 2^2 $ & 32 & yes &no\\
7 & $\{0..4\}\times 2^1$ & 9 & no & no\\
1--6 & full spectrum & $\le 5$ & yes &yes\\
\hline
\end{tabular}
\\[5pt]
Table 24: $n=16$(b), $k=4$, $d=8$, $m_f=6$\\[10pt]
\begin{tabular}{|c|c|c|c|c|}
\hline
$m$ & $\{{\rm minors}\}$ & $\Delta(m)$ & max? & full?\\
\hline
10--16 & as for $16$(b) & -- & -- & no\\
9 & $\{0..9\}\times 4^1$ &$14 \times 4^1$& no & no\\
8 & $\{0..10,12,16\}\times 2^1 $ & $16\times 2^1$ & yes &no \\
1--7 & full spectrum & $\le 9$ & yes &yes\\
\hline
\end{tabular}
\\[5pt]
Table 25: $n=16$(c), $k=4$, $d=8$, $m_f=7$\\[10pt]
\begin{tabular}{|c|c|c|c|c|}
\hline
$m$ & $\{{\rm minors}\}$ & $\Delta(m)$ & max? & full?\\
\hline
9--16 & as for $16$(c) & -- & -- & no\\
1--8 & full spectrum & $\le 32$ & yes &yes\\
\hline
\end{tabular}
\\[5pt]
Table 26: $n=16$(d), $k=4$, $d=8$, $m_f=8$
\end{center}
\squash
\end{table}

\pagebreak[4]
\begin{table}[ht]
\begin{center}
{\bf Order 17}\\[20pt]
\begin{tabular}{|c|c|c|c|c|}
\hline
$m$ & $\{{\rm minors}\}$ & $\Delta(m)$ & max? & full?\\
\hline
17 &$\{5 \}\times 4^8$& $5 \times 4^8$ & yes &no\\ 
16 & $\{2, 3, 8\}\times 4^7$&$8 \times 4^7$&yes &no\\
15 & $\{0..4\}\times 4^6$& $35 \times 3^6$& no & no\\
14 & $\{0..4\}\times 4^5$ & $13\times 3^6$ & no & no\\
13 & $\{0..7\}\times 4^4$ &$5 \times 3^6$& no & no \\
12 & $\{0..9\}\times 4^3$&$2 \times 3^6$& no & no \\
11 & $\{0..13,15\}\times 4^2$ & $20\times 4^2$ & no & no\\
10 & $\{0..21,24,27\}\times 4^1$ & $36\times 4^1$ & no & no\\
9 & $\{0..40,42,44,45,48\}$ &56& no & no\\
1--8 & full spectrum & $\le 32$ & yes &yes\\
\hline
\end{tabular}
\\[5pt]
Table 27: $n=17$(a), $k=4$, $d=1$, $m_f=8$\\[20pt]
\begin{tabular}{|c|c|c|c|c|}
\hline
$m$ & $\{{\rm minors}\}$ & $\Delta(m)$ & max? & full?\\
\hline
10--17 & as for $17$(a) & -- & -- & no\\
9 & $\{0..22,24\}\times 2^1$ &$28\times 2^1$& no & no\\
1--8 & full spectrum & $\le 32$ & yes &yes\\
\hline
\end{tabular}
\\[5pt]
Table 28: $n=17$(b), $k=4$, $d=1$, $m_f=8$\\[20pt]
\begin{tabular}{|c|c|c|c|c|}
\hline
$m$ & $\{{\rm minors}\}$ & $\Delta(m)$ & max? & full?\\
\hline
11--17 & as for $17$(a) & -- & -- & no\\
10 & $\{0..10,12\}\times 2^3 $ & $18\times 2^3$ & no & no\\
9 & $\{0..12\}\times 2^2 $ &$14\times 2^2$& no & no\\
8 & $\{0..10,12,16\}\times 2^1 $ & $16\times 2^1$ & yes &no\\
1--7 & full spectrum & $\le 9$ & yes &yes\\
\hline
\end{tabular}
\\[5pt]
Table 29: $n=17$(c), $k=4$, $d=1$, $m_f=7$
\end{center}
\squash
\end{table}

\pagebreak[4]
\begin{table}[ht]
\begin{center}
{\bf Order 18}\\[20pt]
\begin{tabular}{|c|c|c|c|c|}
\hline
$m$ & $\{{\rm minors}\}$ & $\Delta(m)$ & max? & full?\\
\hline
18 & $\{17\} \times 4^8$ & $17\times4^8$ & yes &no\\
17 &$\{6, 7, 10, 11\}\times 4^7$& $20 \times 4^7$ & no & no\\ 
16 & $\{0..8, 10, 11, 13\}\times 4^6 $&$32 \times 4^6$&no & no\\
15 & $\{0..16\}\times 4^5$& $35 \times 3^6$& no & no\\
14 & $\{0..20\}\times 4^4$ & $13\times 3^6$ & no & no\\
13 & $\{0..24,26,28\}\times 4^3$ &$5 \times 3^6$& no & no \\
12 & $\{0..38,40,41,44,52\}\times 4^2$&$2 \times 3^6$& no & no \\
11 & $\{0..62,64,68,80\}\times 4^1$ & $80 \times 4^1$ & yes &no\\
  & $\{0..94,96..98,100..102,$&&&\\[-4pt]
\raisebox{5pt}{10} &$104,108,112,128\} $ & \raisebox{5pt}{144} 
 & \raisebox{5pt}{no} & \raisebox{5pt}{no}\\
9 & $\{0..40,42,44,45,48\}$ &56& no & no\\
1--8 & full spectrum & $\le 32$ & yes &yes\\
\hline
\end{tabular}
\\[5pt]
Table 30: $n=18$(a), $k=4$, $d=7$, $m_f=8$\\[15pt]
\begin{tabular}{|c|c|c|c|c|}
\hline
$m$ & $\{{\rm minors}\}$ & $\Delta(m)$ & max? & full?\\
\hline
14--18 & as for $18$(a) & -- & -- & no\\
13 & $\{0..24,26,28,32\}\times 4^3$ &$5 \times 3^6$& no & no \\
12 & $\{0..38,40,41,44,64\}\times 4^2$ &$2 \times 3^6$& no & no \\
11 & $\{0..62,64,68,80\}\times 4^1$ & $80 \times 4^1$ & yes &no\\
10 & $\{0..52,54,56,64\}\times 2^1$ & $72\times 2^1$ & no & no\\
9 & $\{0..40,42,44,45,48\}$ &56& no & no\\
1--8 & full spectrum & $\le 32$ & yes &yes\\
\hline
\end{tabular}
\\[5pt]
Table 31: $n=18$(b), $k=4$, $d=7$, $m_f=8$\\[15pt]
\begin{tabular}{|c|c|c|c|c|}
\hline
$m$ & $\{{\rm minors}\}$ & $\Delta(m)$ & max? & full?\\
\hline
14--18 & as for $18$(a) & -- & -- & no\\
13 & $\{0..24, 32\}\times 4^3$ &$5 \times 3^6$& no & no \\
12 & $\{0..37,40, 64\}\times 4^2$&$2 \times 3^6$& no & no \\
11 & $\{0..32\}\times 2^3$ & $40 \times 2^3$ & no & no\\
10 & $\{0..28,32\}\times 2^2$ & $36\times 2^2$ & no & no\\
9 & $\{0..22,24\}\times 2^1$ &$28\times 2^1$& no & no\\
1--8 & full spectrum & $\le 32$ & yes &yes\\
\hline
\end{tabular}
\\[5pt]
Table 32: $n=18$(c), $k=4$, $d=10$, $m_f=8$
\end{center}
\squash
\end{table}

\pagebreak[4]
\begin{table}[ht] 		% n = 19(a) abbreviated format
\begin{center}
{\bf Order 19}\\[20pt]
\begin{tabular}{|c|c|c|c|c|}
\hline
$m$ & (min, max) $|$minor$|$ & $\Delta(m)$ & max? & full?\\
\hline
19 & $(833, 833)\times 4^6$ & $833\times 4^6$ & yes &no\\
18 & $(140,784)\times 4^5$ & $1088\times4^5$ & no & no\\
17 &$(0,672)\times 4^4$& $1280 \times 4^4$ & no & no\\ 
16 & $(0, 676)\times 4^3$&$2048 \times 4^3$&no & no\\
15 & $(0, 1050)\times 4^2$& $35 \times 3^6$ & no & no\\
14 & $(0,1470)\times 4^1$ & $13\times 3^6$ & no & no\\
13 & $(0,1904)$ &$3645$& no & no \\
12 & $(0,756)$&$1458$& no & no \\
11 & $(0,312)$ & $320$ & no & no\\
10 & $(0,128)$ & 144 & no & no\\
1--9 & full spectrum & $\le 56$ & yes &yes\\
\hline
\end{tabular}
\\[5pt]
Table 33: $n=19$(a), $k=4$, $d=10$, $m_f=9$\\[20pt]
\begin{tabular}{|c|c|c|c|c|}
\hline
$m$ & (min, max) $|$minor$|$ & $\Delta(m)$ & max? & full?\\
\hline
19 & $(833, 833)\times 4^6$ & $833\times 4^6$ & yes &no\\
18 & $(168,616)\times 4^5$ & $1088\times4^5$ & no & no\\
17 &$(0,672)\times 4^4$& $1280 \times 4^4$ & no & no\\ 
16 & $(0, 740)\times 4^3$&$2048 \times 4^3$&no & no\\
15 & $(0, 1024)\times 4^2$& $35 \times 3^6$ & no & no\\
14 & $(0,1536)\times 4^1$ & $13\times 3^6$ & no & no\\
13 & $(0,2048)$ &$3645$& no & no \\
12 & $(0,1024)$ &$1458$& no & no \\
11 & $(0,288)$ & $320$ & no & no\\  % differs from 19(a)
10 & $(0,128)$ & 144 & no & no\\
1--9 & full spectrum & $\le 56$ & yes &yes\\
\hline
\end{tabular}
\\[5pt]
Table 34: $n=19$(b), $k=4$, $d=10$, $m_f=9$\\[20pt]
\begin{tabular}{|c|c|c|c|c|}	% n = 19(c) abbreviated
\hline
$m$ & (min, max) $|$minor$|$ & $\Delta(m)$ & max? & full?\\
\hline
14--19 & as for 19(b) & -- & -- & -- \\
13 & $(0,2560)$ &$3645$& no & no \\
1--12 & as for 19(b) & -- & -- & -- \\
\hline
\end{tabular}
\\[5pt]
Table 35: $n=19$(c), $k=4$, $d=10$, $m_f=9$
\end{center}
\squash
\end{table}

\pagebreak[4]
\begin{table}[ht]
\begin{center}
{\bf Order 20}\\[20pt]
\begin{tabular}{|c|c|c|c|c|}
\hline
$m$ & $\{{\rm minors}\}$ & $\Delta(m)$ & max? & full?\\
\hline
20 & $\{2\}\times 5^{10}$ & $2\times 5^{10}$ & yes &no\\
19 & $\{1\}\times 5^9$ & $833\times 4^6$ & no &no\\
18 & $\{0,1\}\times 5^8$ & $17\times4^8$ & no & no\\
17 &$\{0,1\}\times 5^7$& $5 \times 4^8$ & no & no\\ 
16 & $\{0..2\}\times 5^6$&$2 \times 4^8$&no & no\\
15 & $\{0..3\}\times 5^5$& $35 \times 3^6$ & no & no\\
14 & $\{0..5\}\times 5^4$ & $13\times 3^6$ & no & no\\
13 & $\{0..9\}\times 5^3$ &$5 \times 3^6$& no & no \\
12 & $\{0..18,20,24,32\}\times 5^2$&$2 \times 3^6$& no & no \\
11 & $\{0..40,42,44,48\}\times 5^1$ & $64\times 5^1$ & no & no\\
10 & $\{0..92,95,96,100,104,108,112,125,144\}$ & 144 & yes & no\\
9 & $\{0..40,42,44,48\}$ &56& no &no\\
1--8 & full spectrum & $\le 32$ & yes &yes\\
\hline
\end{tabular}
\\[5pt]
Table 36: $n=20$(a), $k=5$, $d=10$, $m_f=8$\\[20pt]
\begin{tabular}{|c|c|c|c|c|}		% n = 20(b)
\hline
$m$ & $\{{\rm minors}\}$ & $\Delta(m)$ & max? & full?\\
\hline
12--20 & as for $20$(a) & -- & -- & no\\
11 & $\{0..40,42,44,45,48\}\times 5^1$ & $64\times 5^1$ & no & no\\
   & $\{0..90,92,93,95..102,104,108,$ &  &  & \\[-4pt]
\raisebox{5pt}{10} & $  112,117,120,125,128,144\}$ 
 & \raisebox{5pt}{144} & \raisebox{5pt}{yes} & \raisebox{5pt}{no}\\
9 & $\{0..40,42,44,45,48\}$ &56& no &no\\
1--8 & full spectrum & $\le 32$ & yes &yes\\
\hline
\end{tabular}
\\[5pt]
Table 37: $n=20$(b), $k=5$, $d=10$, $m_f=8$\\[20pt]
\begin{tabular}{|c|c|c|c|c|}
\hline
$m$ & $\{{\rm minors}\}$ & $\Delta(m)$ & max? & full?\\
\hline
11--20 & as for $20$(b) & -- & -- & no\\
 & $\{0..88,90,92,93,96,99,100,102,$ & & & \\[-4pt]
\raisebox{5pt}{10} & $104,108,112,120,125,128,144\}$ 
 & \raisebox{5pt}{144} & \raisebox{5pt}{yes} & \raisebox{5pt}{no}\\
1--9 & as for 20(b)  & -- & -- & --\\
\hline
\end{tabular}
\\[5pt]
Table 38: $n=20$(c), $k=5$, $d=10$, $m_f=8$\\
\end{center}
\squash
\end{table}

\pagebreak[4]
\begin{table}[ht] 	
\begin{center}
{\bf Order 21(a)--(b)}\\[30pt]
\begin{tabular}{|c|c|c|c|c|}
\hline
$m$ & (min, max) $|$minor$|$ & $\Delta(m)$ & max? & full?\\
\hline
21 & $(29,29)\times 5^{9}$ & $29\times 5^{9}$ & yes &no\\
20 & $(10,30)\times 5^{8}$ & $50\times 5^{8}$ & no &no\\
19 & $(0,35)\times 5^7$ & $833\times 4^6$ & no &no\\
18 & $(0,40)\times 5^6$ & $17\times4^8$ & no & no\\
17 &$(0,45)\times 5^5$& $5 \times 4^8$ & no & no\\ 
16 & $(0,65)\times 5^4$&$2 \times 4^8$&no & no\\
15 & $(0,100)\times 5^3$& $35 \times 3^6$ & no & no\\
14 & $(0,240)\times 5^2$ & $13\times 3^6$ & no & no\\
13 & $(0,416)\times 5^1$ &$5 \times 3^6$& no & no \\
12 & $(0,800)$&$1458$& no & no \\
11 & $(0,320)$ & $320$ & yes & no\\
10 & $(0,144)$ & 144 & yes & no\\ % 116 is missing
1--9 & full spectrum & $\le 56$ & yes &yes\\
\hline
\end{tabular}
\\[5pt]
Table 39: $n=21$(a), $k=5$, $d=10$, $m_f=9$\\[30pt]
\begin{tabular}{|c|c|c|c|c|}	% n = 21(b), R2, abbreviated
\hline
$m$ & (min, max) $|$minor$|$ & $\Delta(m)$ & max? & full?\\
\hline
15--21 & as for $21$(a) & -- & -- & no\\
14 & $(0,216)\times 5^2$ & $13\times 3^6$ & no & no\\
13 & $(0,400)\times 5^1$ &$5 \times 3^6$& no & no \\
12 & $(0,800)$&$1458$& no & no \\
11 & $(0,320)$ & $320$ & yes & no\\ % 259 missing
1--10 & full spectrum & $\le 144$ & yes &yes\\
\hline
\end{tabular}
\\[5pt]
Table 40: $n=21$(b), $k=5$, $d=10$, $m_f=10$\\
\end{center}
\squash
\end{table}

\pagebreak[4]
\begin{table}[ht] 	
\begin{center}
{\bf Order 21(c)--(e)}\\[20pt]
\begin{tabular}{|c|c|c|c|c|}
\hline
$m$ & (min, max) $|$minor$|$ & $\Delta(m)$ & max? & full?\\
\hline
15--21 & as for $21$(a) & -- & -- & no\\
14 & $(0,200)\times 5^2$ & $13\times 3^6$ & no & no\\
13 & $(0,400)\times 5^1$ &$5 \times 3^6$& no & no \\
12 & $(0,800)$&$1458$& no & no \\
11 & $(0,304)$ & $320$ & no & no\\ % 259 missing
10 & $(0,144)$ & 144 & yes & no\\ % 116 missing, differs from 21(b)
1--9 & full spectrum & $\le 56$ & yes &yes\\
\hline
\end{tabular}
\\[5pt]
Table 41: $n=21$(c), $k=5$, $d=11$, $m_f=9$\\[20pt] % $d=11$ differs from 20(a-b)
\begin{tabular}{|c|c|c|c|c|}
\hline
$m$ & (min, max) $|$minor$|$ & $\Delta(m)$ & max? & full?\\
\hline
15--21 & as for $21$(a) & -- & -- & no\\
14 & $(0,240)\times 5^2$ & $13\times 3^6$ & no & no\\
13 & $(0,416)\times 5^1$ &$5 \times 3^6$& no & no \\
12 & $(0,800)$ &$1458$& no & no \\
11 & $(0,320)$ & $320$ & yes & no\\ % 315 missing
1--10 & full spectrum & $\le 144$ & yes &yes\\
\hline
\end{tabular}
\\[5pt]
Table 42: $n=21$(d), $k=5$, $d=10$, $m_f=10$\\[20pt]
\begin{tabular}{|c|c|c|c|c|}
\hline
$m$ & (min, max) $|$minor$|$ & $\Delta(m)$ & max? & full?\\
\hline
15--21 & as for $21$(a) & -- & -- & no\\
14 & $(0,212)\times 5^2$ & $13\times 3^6$ & no & no\\
13 & $(0,368)\times 5^1$ &$5 \times 3^6$& no & no \\
12 & $(0,800)$ &$1458$& no & no \\
11 & $(0,288)$ & $320$ & no & no\\ 
10 & $(0,144)$ & 144 & yes & no\\ % 116 missing
1--9 & full spectrum & $\le 56$ & yes &yes\\
\hline
\end{tabular}
\\[5pt]
Table 43: $n=21$(e), $k=5$, $d=11$, $m_f=9$
\end{center}
\squash
\end{table}

\end{document}